\documentclass[11pts]{amsart}

\usepackage{amsmath,amsthm,amscd,amssymb,bbm,color}
\usepackage{latexsym}
\usepackage{fullpage}
\usepackage{tikz-cd}
\usepackage{mathtools}
\usepackage[toc]{appendix}

\usepackage{dsfont}

\usepackage{setspace}
\onehalfspacing

\usepackage{nohyperref}

\usepackage [english]{babel}
\usepackage [autostyle, english = american]{csquotes}
\MakeOuterQuote{"}

\numberwithin{equation}{section}

\newtheorem{lem}{Lemma}[section]
\newtheorem{prop}{Proposition}[section]
\newtheorem{thm}{Theorem}[section]

\newtheorem{cor}{Corollary}[section]

\newtheorem{rem}{Remark}[section]

\usepackage{thmtools,thm-restate}

\DeclareMathOperator{\R}{\mathbb{R}}

\DeclareMathOperator{\N}{\mathbb{N}}

\DeclareMathOperator{\one}{\mathds{1}}

\newcommand{\dd}{\mathrm{d}}

\title{On the Liouville Function in Short Intervals}
\author{Jake Chinis\vspace{-1.5\baselineskip}}

\begin{document}

\maketitle

\begin{abstract}
    Let $\lambda$ denote the Liouville function. Assuming the Riemann Hypothesis, we prove that
        \begin{align*}
            \int_X^{2X}\Big|\sum_{x\leq n \leq x+h}\lambda(n) \Big|^2\dd x \ll Xh(\log X)^6,
        \end{align*}
    as $X\rightarrow \infty$, provided $h=h(X)\leq \exp\left(\sqrt{\left(\frac{1}{2}-o(1)\right)\log X \log\log X}\right).$ The proof uses a simple variation of the methods developed by Matom{\"a}ki and Radziwi{\l}{\l} in their work on multiplicative functions in short intervals, as well as some standard results concerning smooth numbers.
\end{abstract}

\section{Introduction}

Let $\lambda$ denote the Liouville function; that is, the completely multiplicative function defined by $\lambda(p):=-1$, for all primes $p$. It is well-known that the Prime Number Theorem (PNT) is equivalent to the fact that $\lambda$ exhibits some cancellation in its partial sums; more precisely, that
\begin{align*}
    \sum_{n\leq x}\lambda(n)=o(x),
\end{align*}
as $x\rightarrow \infty.$

The \textit{M{\"o}bius Randomness Law} (see page 338 of \cite{IwaKow}) tells us that $\{\lambda(n)\}_n$ should behave like a sequence of independent random variables, taking on the values $\pm 1$ with equal probability. As a result, we expect "square-root cancellation" in the partial sums for the Liouville function; that is, for any $\epsilon>0$,
\begin{align*}
    \sum_{n\leq x}\lambda(n) \ll x^{\frac{1}{2}+\epsilon},
\end{align*}
as $x\rightarrow \infty.$ In fact, the above estimate is equivalent to the Riemann Hypothesis (RH); see Theorem 14.25(C) of \cite{Titch}. Furthermore, it is possible to quantity $\epsilon$ in terms of $x$; see \cite{Sound}. 

The Liouville function also exhibits cancellation in short intervals, provided that the length of the interval is sufficiently large. Motohashi \cite{motohashi1976} and Ramachandra \cite{Ramachandra1976}, independently, proved that
\begin{align*}
    \sum_{x\leq n \leq x+h}\lambda(n)=o(h),
\end{align*}
provided $h>x^{\frac{7}{12}+\epsilon}$. Assuming RH, this can be improved to $h>x^\frac{1}{2}(\log x)^A$, for some suitable constant $A>0$; see \cite{MontMaier}. Recently, Matom{\"a}ki and Ter{\"a}v{\"a}inen \cite{MatTer_Mob} improved Motohashi and Ramachandra's results to include the larger range $h>x^{\frac{11}{20}+\epsilon}.$

By relaxing the condition that our estimates hold for all short intervals, we can get results that hold for smaller values of $h$. In unpublished work of Gao\footnote{Strictly speaking, Gao's result, and those stated above his, were initially proven for the M{\"o}bius function, $\mu$, but the proofs extend to the Liouville function with little effort.}, it is shown that
\begin{align*}
    \int_{X}^{2X}\Big| \sum_{x\leq n \leq x+h}\lambda(n) \Big|^2\dd x 
    \ll
    Xh(\log X)^A,
\end{align*}
assuming RH, for some large constant $A>0$. 

The preceding results should be compared with what is known for primes in short intervals. More precisely, an equivalent form of the PNT is given by 
\begin{align*}
    \sum_{n\leq x} \Lambda(n) = x + o(x),
\end{align*}
where $\Lambda$ denotes the von Mangoldt function (defined to be $\log p$ if $n$ is a power of the prime $p$ and $0$ otherwise). Furthermore, the Riemann Hypothesis is equivalent to the following estimate:
\begin{align*}
    \sum_{n\leq x} \Lambda(n) = x + O(x^\frac{1}{2}(\log x)^2);
\end{align*}
in particular, RH implies that
\begin{align*}
    \sum_{x\leq n \leq x+h}\Lambda(n) = h+o(h),
\end{align*}
provided $h>x^\frac{1}{2}(\log x)^{2+\epsilon}.$ 

Again restricting ourselves to results that hold almost everywhere, Selberg \cite{Selberg} proved that if RH holds, then
\begin{align*}
    \int_{X}^{2X}\Big| \sum_{x\leq n \leq x+h} \Lambda(n) - h \Big|^2 \dd x 
    \ll
    Xh(\log X)^2,
\end{align*}
so that almost all short intervals contain the correct number of primes. 

It is important to note that both Gao and Selberg obtain square-root cancellation in their estimates. In each case, they relate the short sum to a contour integral of some Dirichlet series and then shift this contour to the edge of the critical strip, picking up any poles along the way. For Selberg, the corresponding Dirichlet series is $\zeta^\prime(s)/\zeta(s)$ and, assuming RH, the only pole will be at $s=1$. Near the half-line, we still need to consider the contributions from the non-trivial zeros, $\rho=1/2+i\gamma$, of $\zeta(s)$, as $\zeta^\prime(s)/\zeta(s)$ is large for $s\approx\rho$. Fortunately, the behaviour of $\zeta^\prime(s)/\zeta(s)$ for $s\approx\rho$ is well understood (it is even conjectured that the zeros of $\zeta(s)$ are simple). For Gao, the corresponding Dirichlet series is $\zeta(2s)/\zeta(s)$, which also has poles at $s=\rho$, but the residues in this case are equal to $\zeta(2\rho)/\zeta^\prime(\rho)$. Since very little is known about $1/\zeta^\prime(\rho)$, we need to proceed in an indirect manner. The key idea is to use a sum over primes to approximate $\zeta(s)$ and then avoid "clusters" of zeros of $\zeta(s)$ on the half-line: near these regions, $1/\zeta(s)$ is large and the contour is chosen so that $1/\zeta(s)$ is not too large, at least in some sense; see \cite{MontMaier}, to get an idea of Gao's proof.

We now turn our attention to the breakthrough work of Matom{\"a}ki and Radziwi{\l}{\l}, where they relate the average value of $1$-bounded multiplicative functions in short intervals to the corresponding average value in large intervals:
\begin{thm}[\cite{MatRad_Mult}] 
\label{MR_thm}
Let $f:\N\rightarrow [-1,1]$ be a multiplicative function and let $h=h(X)\rightarrow \infty$ arbitrarily slowly as $X\rightarrow \infty$. Then, for almost all $x\in[X,2X]$,
\begin{align*}
    \frac{1}{h}\sum_{x\leq n\leq x+h}f(n) = \frac{1}{X}\sum_{X\leq n \leq 2X}f(n) + o(1),
\end{align*}
with $o(1)$ not depending on $f$.
\end{thm}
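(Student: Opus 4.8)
\emph{Step 1 (reduction to a mean square).}
By Chebyshev's inequality it suffices to prove that
\begin{equation*}
\int_X^{2X}\Big|\frac1h\sum_{x\le n\le x+h}f(n)-\frac1X\sum_{X\le n\le 2X}f(n)\Big|^2\,\dd x = o(X)\qquad(X\to\infty)
\end{equation*}
uniformly over the $1$-bounded multiplicative $f$; running this along a dyadic sequence of scales then yields the pointwise statement. A Parseval-type identity for Dirichlet series (the opening reductions of \cite{MatRad_Mult}) controls the left side by a weighted mean square of the Dirichlet polynomial $F(s):=\sum_{X<n\le 2X}f(n)n^{-s}$ near the edge of the critical strip, the weight being essentially supported on $|t|\le T:=X/h$; the contribution of very small $|t|$ recovers the long average (and, $f$ being real-valued, it cannot correlate with $n^{it}$ for $t\ne 0$, so no spurious main term appears; a short auxiliary step reconciles the fixed average with the slightly more local averages the identity produces). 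The net effect is that everything comes down to beating, by a factor tending to infinity uniformly in $f$, the mean value theorem's trivial bound for $\int_{|t|\le T}|F(1+it)|^2\,\dd t$.

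\emph{Step 2 (factoring out a prime).}
Fix a set of primes $\mathcal P=\{p:P_1\le p\le P_2\}$ with $P_1,P_2\to\infty$ slowly and $P_2\le h$, chosen so that $L:=\sum_{p\in\mathcal P}1/p\to\infty$ while, by Mertens, the number of $n\in(X,2X]$ with no prime factor in $\mathcal P$ is $\ll X\,\log P_1/\log P_2=o(X)$. Writing $n=pm$ with $p\in\mathcal P$ dividing $n$, multiplicativity gives (the terms with $p^2\mid n$, and the integers with no prime factor in $\mathcal P$, being negligible)
\begin{equation*}
F(s)\approx\frac1L\sum_{p\in\mathcal P}\frac{f(p)}{p^{s}}\,Q_p(s),\qquad Q_p(s):=\sum_{X/p<m\le 2X/p}f(m)m^{-s},
\end{equation*}
the factor $1/L$ correcting for the fact that $\sum_{p\in\mathcal P}\tfrac{f(p)}{p^{s}}Q_p(s)=\sum_{X<n\le2X}\omega_{\mathcal P}(n)f(n)n^{-s}$ weights a typical $n$ by $\omega_{\mathcal P}(n):=\#\{p\in\mathcal P:p\mid n\}\approx L$. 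The $L^2$-cost of this replacement is bounded via the Tur{\'a}n--Kubilius inequality $\sum_{X<n\le2X}(\omega_{\mathcal P}(n)-L)^2\ll LX$ and the mean value theorem, and is of acceptable relative size $O(1/L)$.

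\emph{Step 3 (the mean square of the factored polynomial).}
It remains to beat the trivial bound for $\int_{|t|\le T}\big|\sum_{p\in\mathcal P}\tfrac{f(p)}{p^{1+it}}Q_p(1+it)\big|^2\,\dd t$. Decompose $\mathcal P$ dyadically into blocks $[P,2P]$; after peeling off the mild dependence of $Q_p$ on $p$, each block reduces to estimating $\int_{|t|\le T}|R_P(1+it)|^2|Q(1+it)|^2\,\dd t$, where $R_P(s):=\sum_{p\sim P}f(p)p^{-s}$ is a short prime polynomial and $Q(s):=\sum_{m\sim X/P}g(m)m^{-s}$ with $|g|\le1$. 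The mean value theorem gives $\int_{|t|\le T}|Q(1+it)|^2\,\dd t\ll1$; and — because $R_P$ is supported on primes $p\le P_2=X^{o(1)}$ whereas $t$ runs over an interval of length $T\gg P_2$ — a Hal{\'a}sz--Montgomery large-values inequality shows that $\{|t|\le T:|R_P(1+it)|\ge V\}$ has small measure as soon as $V$ exceeds the (tiny) root-mean-square $\ll(P\log P)^{-1/2}$ of $R_P$ there. Splitting $[-T,T]$ according to the size of $R_P(1+it)$ — using the $L^2$-smallness of $Q$ on the range where $R_P$ is moderate, the measure bound where it is large, and the near-disjointness of the large-value sets of distinct blocks so as to avoid a ruinous Cauchy--Schwarz over the $\asymp\log P_2$ blocks — produces the required saving. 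If $h$, and hence $L$, grows too slowly for a single range of primes to suffice, one instead works with a family of nested ranges $[P_j,Q_j]$, $1\le j\le J$, using that almost every $n\in(X,2X]$ has a prime factor in \emph{some} $[P_j,Q_j]$ and pigeonholing over $j$.

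\emph{Where the difficulty lies.}
The genuinely hard part — the decisive contribution of \cite{MatRad_Mult} — is Step 3. Any naive appeal to Cauchy--Schwarz on $\sum_p\tfrac{f(p)}{p^{1+it}}Q_p(1+it)$ merely reproduces the trivial mean value bound; the saving comes only from genuinely exploiting the cancellation in the $p$-sum, i.e.\ from the fact that the prime polynomial $R_P$ is large only on a sparse subset of the long line $[-T,T]$, so that off this set $F(1+it)$ is forced to be small while on it a Hal{\'a}sz-type estimate keeps matters under control. Arranging that the accumulated saving survives the decoupling of $Q_p$ in $p$ and the summation over dyadic (or nested) prime ranges, and making every estimate uniform in $f$ and in the arbitrarily slow growth of $h$, is the technical heart of the proof.
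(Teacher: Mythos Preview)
The paper does not prove this theorem at all: it is quoted verbatim as a result of Matom{\"a}ki and Radziwi{\l}{\l} \cite{MatRad_Mult} and serves only as background and motivation for the main theorem of the paper. There is therefore no ``paper's own proof'' to compare your proposal against.

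That said, what you have written is a reasonable high-level outline of the argument in \cite{MatRad_Mult} (and of Soundararajan's exposition \cite{Sound_MR}): the Parseval reduction to a mean square of a Dirichlet polynomial, the Ramar{\'e}-type factorisation $n=pm$ with $p$ drawn from a convenient range (the Tur{\'a}n--Kubilius control of the replacement error), and the dichotomy on the size of the short prime polynomial $R_P(1+it)$, using the mean value theorem where $R_P$ is small and a Hal{\'a}sz--Montgomery large-values estimate where it is large. Your remark that, when $h$ grows too slowly, one must iterate with a nested family of prime ranges $[P_j,Q_j]$ is also correct and is precisely the device that makes the result uniform in arbitrarily slowly growing $h$. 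As a sketch it is accurate; as a proof it of course leaves all the real work --- the careful separation of $Q_p$ from $p$, the combinatorics ensuring the savings survive the dyadic decomposition, and the uniformity in $f$ --- to the reader, which is fine given that the full argument is in the cited reference.
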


In the case of the Liouville function, Theorem \ref{MR_thm} implies that
\begin{align*}
    \sum_{x\leq n \leq x+h}\lambda(n)=o(h),
\end{align*}
for almost all $x\in[X,2X]$; in particular,
\begin{align*}
    \int_X^{2X}\Big|\sum_{x\leq n \leq x+h} \lambda(n) \Big|^2\dd x = o(Xh^2),
\end{align*}
for any $h=h(X)\rightarrow \infty$ as $X\rightarrow \infty.$ Although Matom{\"a}ki and Radziwi{\l}{\l} do not get square-root cancellation, their results are unconditional, hold for all $h$ going to infinity, and avoid the complex analytic approach used by both Gao and Selberg; instead, they employ a clever decomposition of the corresponding Dirichlet polynomials, which is done by restricting to integers which have prime factors from certain convenient ranges. For a detailed account of the work in \cite{MatRad_Mult} restricted to the Liouville function, see \cite{Sound_MR}. 

In this paper, we apply a simple variation of the methods developed in \cite{MatRad_Mult} in order to prove the following:

\begin{thm}
\label{main_thm}
Assume the Riemann Hypothesis. Then,
\begin{align*}
        \int_X^{2X}\Big|\sum_{x\leq n\leq x+h}\lambda(n) \Big|^2\dd x 
        \ll
        Xh(\log X)^{6},
\end{align*}
provided $h=h(X)\leq \exp\left(\sqrt{\left(\frac{1}{2}-o(1)\right)\log X \log\log X}\right)$.
\end{thm}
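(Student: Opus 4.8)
The plan is to adapt the Matom{\"a}ki--Radziwi{\l}{\l} machinery to obtain square-root-type cancellation for $\lambda$ in almost all short intervals, using RH only to control a single Dirichlet polynomial over a long interval. First I would pass from the sharp cutoff $\sum_{x\leq n\leq x+h}\lambda(n)$ to a smoothed version and then, via a standard Parseval/Plancherel step (as in Lemma 1 of \cite{Sound_MR}), bound the mean square by
\begin{align*}
    \int_X^{2X}\Big|\sum_{x\leq n\leq x+h}\lambda(n)\Big|^2\dd x
    \ll
    h^2\int_{1/h}^{X^{1/2}}\Big|\sum_{X\leq n\leq 2X}\frac{\lambda(n)}{n^{1/2+it}}\Big|^2\dd t
    + \text{(small)},
\end{align*}
so that the problem reduces to a mean-square estimate for the Dirichlet polynomial $F(s)=\sum_{X\leq n\leq 2X}\lambda(n)n^{-s}$ on the half-line, over the range of frequencies $|t|\leq X^{1/2}$.

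Next I would split the integers $n\in[X,2X]$ according to whether they are $y$-smooth or have a prime factor in a convenient range, with $y=\exp(\sqrt{(1/2-o(1))\log X\log\log X})$ chosen exactly so that $y$ just exceeds the allowed $h$. The count of $y$-smooth numbers up to $X$ is $X\rho(u)$ with $u=\log X/\log y$, and by the standard estimate $\rho(u)=u^{-u(1+o(1))}$ (de Bruijn) this is $\ll X(\log X)^{-C}$ for any fixed $C$ once $u$ is a bit larger than $\sqrt{2\log X/\log\log X}$; the trivial bound then disposes of the smooth part. For the non-smooth part I would factor $n=mp$ with $p$ a prime in a dyadic-type range $(P,P^{1+\delta}]$ contained in $(y,X^{o(1)}]$, exactly as in \cite{MatRad_Mult,Sound_MR}, writing $F(s)$ (restricted to non-smooth $n$) as a sum of products $M(s)P(s)$ of shorter Dirichlet polynomials and applying the mean-value theorem for Dirichlet polynomials together with the ``large-values'' lemma that trades a factor of $\log$ for restricting $t$ to the set where $|P(1/2+it)|$ is large.

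The one place RH enters is to handle the ``low-frequency'' part $|t|\leq (\log X)^{B}$, where the combinatorial decomposition is too lossy: here I would instead return to $F(s)$ directly and use that, under RH, $\sum_{n\leq x}\lambda(n)\ll x^{1/2}(\log x)^2$ (equivalently $1/\zeta(s)\ll |t|^{\epsilon}$ in the relevant region, Theorem 14.25(C) of \cite{Titch}), together with partial summation, to get $F(1/2+it)\ll X^{1/2}(\log X)^{O(1)}$ for such $t$; integrating over $|t|\leq(\log X)^B$ contributes $\ll X(\log X)^{O(1)}$, which after multiplying by $h^2$ and dividing appropriately is within the claimed bound. Combining the three contributions---smooth (trivial), low-frequency (RH), and the Matom{\"a}ki--Radziwi{\l}{\l} range (combinatorial)---and bookkeeping the powers of $\log X$ gives the exponent $6$.

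The main obstacle I expect is the interface between the smooth-number input and the combinatorial decomposition: one needs the prime range $(y, X^{o(1)}]$ used in the $mp$ factorization to be nonempty and wide enough for the large-values argument to save enough logs, while simultaneously keeping $y$ small enough that $y$-smooth numbers are genuinely rare to power $(\log X)^6$ and large enough that $y\geq h$, since it is precisely the constraint ``$h$ below the smooth-number threshold'' that forces $h\leq\exp(\sqrt{(1/2-o(1))\log X\log\log X})$. Getting these ranges to fit together---and checking that the $o(1)$ in the exponent of $h$ is exactly what the de Bruijn estimate $\rho(u)=u^{-u(1+o(1))}$ allows---is the delicate part; the rest is a fairly mechanical, if lengthy, transcription of the arguments in \cite{MatRad_Mult} and \cite{Sound_MR} with the trivial and RH-based bounds inserted in the two extreme frequency ranges.
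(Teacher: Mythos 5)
Your outline has the right skeleton (Plancherel reduction, a smooth/non-smooth splitting at the threshold $\exp(\sqrt{(1/2-o(1))\log X\log\log X})$, and RH somewhere), but the engine you propose for the main range of frequencies cannot produce the stated bound. After Plancherel the target is $\int_{0}^{X/h}|F(\tfrac12+it)|^2\,\dd t\ll h(\log X)^6$ with $F(s)=\sum_{X\le n\le 4X}\lambda(n)n^{-s}$; since the mean value theorem alone gives $\approx X$ for this integral, you need a saving of order $X/h^2$, i.e.\ nearly a full factor of $X$. The unconditional Matom\"aki--Radziwi{\l}{\l} decomposition into products $M(s)P(s)$ plus the large-values lemma saves only small factors (which is precisely why their theorem yields $o(Xh^2)$ rather than $O(Xh(\log X)^{O(1)})$), so confining RH to $|t|\le(\log X)^B$ and running the combinatorial argument elsewhere cannot work. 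The paper uses RH in the main range $X^{1/2}\le|t|\le X$ in an essential and different way: one factors out a \emph{single} prime $p>h$ via a Buchstab/Ramar\'e-type identity, bounds the prime polynomial \emph{pointwise} by $\sum_{P\le p\le 2P}p^{-1/2-1/\log X-it}\ll\log X$ (Lemma \ref{Large_t}, a square-root-cancellation statement proved by contour shifting under RH), and only then applies the mean value theorem to the remaining $m$-sum, which has length $\le X/h\le T$ so that Lemma \ref{MVT} is lossless. No iterated decomposition or large-values sieve is needed; the whole point is that $h$ is exactly the threshold making the $m$-polynomial shorter than the integration range.

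Two further steps of your write-up are also wrong as stated. First, the input ``$\sum_{n\le x}\lambda(n)\ll x^{1/2}(\log x)^2$ under RH'' is not known (that is the prime-counting analogue; for $\lambda$ RH gives only $x^{1/2+\epsilon}$), and the pointwise bound $F(\tfrac12+it)\ll X^{1/2}(\log X)^{O(1)}$ you derive from it is no better than trivial: integrating it over $|t|\le(\log X)^B$ gives $\gg X$, which already exceeds the target $h(\log X)^6$. One needs the much stronger $F(\tfrac12+it)\ll_\epsilon(1+|t|)^\epsilon X^\epsilon$ (Lemma \ref{Small_t}), and one needs it on the whole range $|t|\le X^{1/2}$, not just $|t|\le(\log X)^B$. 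Second, for the smooth part a saving of $(\log X)^{-C}$ is not enough when $h$ is super-logarithmic: after the mean value theorem the smooth contribution is $\asymp\Psi(4X,h)/X$, and the target is $\ll(\log X)^6/h$, so you need the full strength $\Psi(X,h)\ll X/h$ --- a saving of a factor of $h$, which is exactly what forces $h\le\exp(\sqrt{(1/2-o(1))\log X\log\log X})$. Your instinct that the threshold comes from the smooth-number count is correct, but the bookkeeping must be done against $1/h$, not against powers of $\log X$.
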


Note that Theorem \ref{main_thm} shows square-root cancellation for the Liouville function in almost all short intervals, provided $(\log X)^{6+\epsilon}\leq h\leq \exp\left(\sqrt{\left(\frac{1}{2}-o(1)\right)\log X \log\log X}\right)$. Of course, Theorem \ref{main_thm} gives an upper bound on the exceptional set of $x\in[X,2X]$ for which square-root cancellation does not hold (via Chebyshev's Inequality). By splitting the short sum into shorter intervals, Theorem \ref{main_thm} can also be used to obtain some cancellation for larger $h$; namely,

\begin{cor}
Assume RH and suppose $h\leq X.$ Then,
\begin{align*}
        \int_X^{2X}\Big|\sum_{x\leq n\leq x+h}\lambda(n) \Big|^2\dd x 
        \ll
        Xh^2(\log X)^{6}\left(\frac{1}{h} + \frac{1}{H}\right),
\end{align*}
where $H=H(X):=\exp\left(\sqrt{\left(\frac{1}{2}-o(1)\right)\log X \log\log X}\right)$.
\end{cor}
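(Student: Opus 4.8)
The plan is to derive the corollary from Theorem \ref{main_thm} by a dyadic splitting of the short sum combined with the Cauchy--Schwarz inequality, using the fact that Theorem \ref{main_thm} applies not to a single privileged length but to \emph{every} length bounded by $H:=\exp\big(\sqrt{(\tfrac12-o(1))\log X\log\log X}\big)$. First I would dispose of the easy range $h\le H$: there the asserted bound follows at once from Theorem \ref{main_thm}, since $Xh^2(\log X)^6\big(\tfrac1h+\tfrac1H\big)\ge Xh(\log X)^6$. So assume $h>H$ from now on, and set $K:=\lceil h/H\rceil$, so that $1\le K\le 2h/H$.

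For each $x$, I would split the integers of $[x,x+h]$ into $K$ consecutive blocks, each contained in an interval of length $H$ — for instance the $j$-th block being $\{n\in\Z:x+jH\le n<x+(j+1)H\}$ for $0\le j\le K-2$ and $\{n\in\Z:x+(K-1)H\le n\le x+h\}$ for $j=K-1$, which is an exact partition of the integers in $[x,x+h]$ because $(K-1)H<h\le KH$. Writing $T_j(x)$ for the sum of $\lambda$ over the $j$-th block, one has $\sum_{x\le n\le x+h}\lambda(n)=\sum_{j=0}^{K-1}T_j(x)$, so Cauchy--Schwarz yields
\begin{align*}
\int_X^{2X}\Big|\sum_{x\le n\le x+h}\lambda(n)\Big|^2\dd x\le K\sum_{j=0}^{K-1}\int_X^{2X}|T_j(x)|^2\dd x .
\end{align*}
Each $T_j(x)$ is a sum of $\lambda$ over a window of integers of the form $[x+jH,\,x+jH+L_j]$ with $0\le L_j\le H$ (up to the inclusion of one endpoint, which alters $T_j(x)$ by $O(1)$ and the integral by $O(X)$). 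After the substitution $u=x+jH$, and using $jH\le(K-1)H<h\le X$, the integral $\int_X^{2X}|T_j(x)|^2\dd x$ is, up to that harmless $O(X)$, at most $\int_X^{3X}\big|\sum_{u\le n\le u+L_j}\lambda(n)\big|^2\dd u$. Covering $[X,3X]$ by $O(1)$ dyadic blocks $[X'',2X'']$ with $X''\asymp X$ and applying Theorem \ref{main_thm} with short-sum length $L_j$ on each — legitimate since $L_j\le H\le\exp\big(\sqrt{(\tfrac12-o(1))\log X''\log\log X''}\big)$ — gives $\int_X^{2X}|T_j(x)|^2\dd x\ll XH(\log X)^6$, uniformly in $j$.

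Summing over the $K$ blocks and recalling $K\le 2h/H$ then produces
\begin{align*}
\int_X^{2X}\Big|\sum_{x\le n\le x+h}\lambda(n)\Big|^2\dd x\ll K^2\,XH(\log X)^6\ll\frac{h^2}{H}\,X(\log X)^6\le Xh^2(\log X)^6\Big(\frac1h+\frac1H\Big),
\end{align*}
the desired estimate. I do not anticipate any real obstacle: the corollary is a soft consequence of Theorem \ref{main_thm}, and the only delicate points are bookkeeping — arranging the block decomposition as an \emph{exact} identity so that no integer is double-counted or dropped, and verifying that the hypothesis of Theorem \ref{main_thm} survives the translations $u=x+jH$ and the passage to the dyadic blocks $[X'',2X'']$, which it does because $X''\asymp X$ and hence $\log X''=(1+o(1))\log X$.
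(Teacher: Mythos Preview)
Your proposal is correct and follows exactly the approach the paper indicates: the paper does not spell out a proof of the corollary but merely says ``By splitting the short sum into shorter intervals, Theorem \ref{main_thm} can also be used to obtain some cancellation for larger $h$,'' which is precisely the block decomposition plus Cauchy--Schwarz argument you carry out. The bookkeeping you flag (exact partition, translation $u=x+jH$, and the passage to dyadic $[X'',2X'']$ with $X''\asymp X$) is handled correctly.
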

\section{Preliminaries}
\label{Preliminaries}

We present here a collection of standard results, which we use freely throughout our paper. The first result is a quantitative version of Perron's Formula, which serves as an approximation to the indicator function on $(1,\infty)$. The second result relates the mean-square of a Dirichlet polynomial to its sum of squares and this will be our main tool in proving Theorem \ref{main_thm}.

\begin{lem}[Quantitative version of Perron's Formula]
\label{EffPer}
Fix $\kappa>0$. Then, 
\begin{align*}
    \frac{1}{2\pi i}\int_{\kappa-iT}^{\kappa+iT}\frac{y^s}{s}\dd s
    =
    \left
        \{\def\arraystretch{1.2}%
            \begin{array}{@{}l@{\quad}l@{}}
                1 & \mbox{if $y>1$}\\
                \frac{1}{2} & \mbox{if $y=1$}\\
                0 & \mbox{if $y<1$}
            \end{array}
    \right.
    +
    \mathcal{O}\Big(\frac{y^\kappa}{\max\{1,T|\log y|\}}\Big),
\end{align*}
uniformly for both $y>0$ and $T>0.$
\end{lem}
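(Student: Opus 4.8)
The plan is to split according to the size of $T|\log y|$ and the distance of $y$ from $1$, reducing each case to an elementary estimate. When $y=1$ the integrand is $1/s$, and parametrising $s=\kappa+it$, whose odd part integrates to zero, gives
\begin{align*}
    \frac{1}{2\pi i}\int_{\kappa-iT}^{\kappa+iT}\frac{\dd s}{s}
    =\frac{1}{\pi}\int_{0}^{T}\frac{\kappa\,\dd t}{\kappa^{2}+t^{2}}
    =\frac{1}{\pi}\arctan\!\left(\frac{T}{\kappa}\right)\in\left(0,\tfrac12\right),
\end{align*}
which is $\tfrac12+\mathcal{O}(1)$; since $y^{\kappa}=1$ and $\max\{1,T|\log y|\}=1$ in this case, this is exactly the assertion.

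For $y\neq 1$ with $T|\log y|\geq 1$ I would shift the line of integration. If $y>1$, apply the residue theorem to the rectangle with vertices $\kappa\pm iT$ and $-U\pm iT$ (for large $U>0$): the only singularity of $y^{s}/s$ strictly inside is the simple pole at $s=0$, of residue $y^{0}=1$. The left edge contributes $\ll y^{-U}T/U\to 0$ as $U\to\infty$ (here $y>1$ is used), while each horizontal edge is bounded by $\int_{-\infty}^{\kappa}y^{\sigma}T^{-1}\,\dd\sigma=y^{\kappa}/(T\log y)$; hence the integral equals $1+\mathcal{O}\big(y^{\kappa}/(T\log y)\big)$. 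If $y<1$, the same argument with the rectangle opening to the right (vertices $\kappa\pm iT$ and $U\pm iT$) encloses no pole, the right edge vanishes as $U\to\infty$, and the horizontal edges are again $\ll y^{\kappa}/(T|\log y|)$, so the integral is $\mathcal{O}\big(y^{\kappa}/(T|\log y|)\big)$. As $\max\{1,T|\log y|\}=T|\log y|$ in this regime, both match the asserted bound.

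The remaining regime $T|\log y|<1$ is where the only genuine work lies: the permitted error is merely $\mathcal{O}(y^{\kappa})$, and the contour shift is powerless. If $\tfrac12\leq y\leq 2$, I would write $y^{s}/s=1/s+(y^{s}-1)/s$; the first piece contributes $\tfrac1\pi\arctan(T/\kappa)=\mathcal{O}(1)$ as above, and since $(y^{s}-1)/s=\log y\int_{0}^{1}y^{vs}\,\dd v$ is entire, one gets
\begin{align*}
    \frac{1}{2\pi i}\int_{\kappa-iT}^{\kappa+iT}\frac{y^{s}-1}{s}\,\dd s
    =\frac{1}{\pi}\int_{0}^{1}y^{v\kappa}\,\frac{\sin(vT\log y)}{v}\,\dd v
    \ll\int_{0}^{1}T|\log y|\,\dd v
    =T|\log y|<1,
\end{align*}
using $|\sin u|\leq|u|$ and $y^{v\kappa}\ll 1$; comparing with $\one_{y>1}\in\{0,1\}$ and noting $y^{\kappa}\asymp 1$ gives $\mathcal{O}(y^{\kappa})$. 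If instead $y$ is bounded away from $1$, then $|\log y|\gg 1$, so $T|\log y|<1$ forces $T\ll 1$, and the crude bound $\big|\tfrac{1}{2\pi i}\int_{\kappa-iT}^{\kappa+iT}y^{s}s^{-1}\,\dd s\big|\leq\tfrac{y^{\kappa}}{2\pi}\int_{-T}^{T}|\kappa+it|^{-1}\,\dd t\ll y^{\kappa}T\ll y^{\kappa}$ suffices, the main term $\one_{y>1}$ being absorbed since $y^{\kappa}\gg 1$ whenever $y>1$. The main obstacle is precisely this $y\approx 1$ analysis: one must keep every estimate uniform in $y$ and $T$ for the fixed $\kappa$ and ensure the $\mathcal{O}(y^{\kappa})$ budget is never exceeded; by contrast, the contour shift only requires the interchange of limits to be justified by absolute convergence for each finite $U$, which is routine.
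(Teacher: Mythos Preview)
Your argument is correct. The paper does not actually prove this lemma; it simply cites \cite[Theorem~II.2.3]{Ten} and moves on. What you have written is essentially the standard textbook derivation (and indeed the one found in Tenenbaum): contour shifting when $T|\log y|\geq 1$, and a direct bound when $T|\log y|<1$ handled via the identity $(y^{s}-1)/s=\log y\int_{0}^{1}y^{vs}\,\dd v$. One cosmetic point: in the last sub-case you use $\int_{-T}^{T}|\kappa+it|^{-1}\,\dd t\leq 2T/\kappa$, which is fine because $\kappa$ is fixed and the implied constants are allowed to depend on it; you might state this dependence explicitly, since the lemma claims uniformity only in $y$ and $T$.
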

\begin{proof}
See \cite[Theorem II.2.3]{Ten}.
\end{proof}

\begin{lem}[Mean Value Theorem]
\label{MVT}
For any sequence of complex numbers $\{a_n\}_n$, we have that
\begin{align*}
    \int_{0}^T\Big|
    \sum_{1\leq n \leq N} a_n n^{it}
    \Big|^2\dd t 
    =
    (T+\mathcal{O}(N))
    \sum_{1\leq n \leq N} |a_n|^2.
\end{align*}
\end{lem}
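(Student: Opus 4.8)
The plan is to run the standard second-moment computation: expand the square, integrate over $t\in[0,T]$ term by term, and separate the diagonal. Expanding
\[
\Big|\sum_{1\le n\le N} a_n n^{it}\Big|^2=\sum_{1\le m,n\le N} a_m\overline{a_n}\,(m/n)^{it},
\]
the terms with $m=n$ integrate to $T\sum_{n\le N}|a_n|^2$, which is the main term, so everything reduces to showing that the off-diagonal contribution is $\ll N\sum_{n\le N}|a_n|^2$. (The case $N=1$ is trivial, so assume $N\ge2$.)

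First I would record, for $m\ne n$, the identity $\int_0^T(m/n)^{it}\,\dd t=\big((m/n)^{iT}-1\big)\big/\big(i\log(m/n)\big)$, so that the off-diagonal part of the integral becomes
\[
\sum_{m\ne n}\frac{a_m\overline{a_n}\,(m/n)^{iT}}{i\log(m/n)}-\sum_{m\ne n}\frac{a_m\overline{a_n}}{i\log(m/n)}.
\]
Writing $\lambda_n:=\tfrac{1}{2\pi}\log n$, so that $\log(m/n)=2\pi(\lambda_m-\lambda_n)$, the second sum is a bilinear form of Hilbert type in the coefficients $(a_n)_{n\le N}$, and since $(m/n)^{iT}=m^{iT}\,\overline{n^{iT}}$, the first sum is the same form applied to $(a_n n^{iT})_{n\le N}$, which has the same $\ell^2$ norm as $(a_n)_{n\le N}$. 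Hence it suffices to prove that $\big|\sum_{m\ne n}x_m\overline{x_n}/(\lambda_m-\lambda_n)\big|\ll N\sum_n|x_n|^2$ for an arbitrary finite sequence $(x_n)_{n\le N}$.

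That last bound is exactly the generalized Hilbert inequality of Montgomery and Vaughan: for distinct reals $(\lambda_n)$ with $\delta:=\min_{m\ne n}|\lambda_m-\lambda_n|$ one has $\big|\sum_{m\ne n}x_m\overline{x_n}/(\lambda_m-\lambda_n)\big|\le\pi\,\delta^{-1}\sum_n|x_n|^2$. For $\lambda_n=\tfrac{1}{2\pi}\log n$ the consecutive gaps $\lambda_{n+1}-\lambda_n=\tfrac{1}{2\pi}\log(1+1/n)$ are decreasing, so the closest pair among $\lambda_1,\dots,\lambda_N$ is $\{\lambda_{N-1},\lambda_N\}$ and $\delta=\tfrac{1}{2\pi}\log\tfrac{N}{N-1}\gg1/N$, giving $\delta^{-1}\ll N$; this closes the argument. (One could instead simply quote the mean value theorem in the sharper Montgomery--Vaughan form $\sum_{n\le N}|a_n|^2(T+O(n))$ from a standard reference such as \cite{IwaKow}, which implies the stated estimate at once.) The only genuinely delicate point is this Hilbert inequality: estimating $1/|\log(m/n)|$ termwise and summing over the near-diagonal range $m=n+O(n)$ loses a factor $\log N$, so one really must exploit the oscillation of $(m/n)^{iT}$ to reach the clean error term $O(N)$. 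That is where the work is, and it is entirely absorbed into the cited inequality.
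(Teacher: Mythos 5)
Your argument is correct: diagonal extraction gives the main term $T\sum_{n\le N}|a_n|^2$, and the off-diagonal reduces, exactly as you say, to two Hilbert-type bilinear forms in $(a_n)$ and $(a_n n^{iT})$ with $\lambda_n=\tfrac{1}{2\pi}\log n$ and minimal spacing $\delta\gg 1/N$, so the Montgomery--Vaughan inequality yields the $O(N)\sum_{n\le N}|a_n|^2$ error. The paper gives no proof at all, only a citation to \cite[Theorem 9.1]{IwaKow}; your write-up is precisely the standard proof behind that reference, so there is nothing to compare beyond noting that you have supplied the details the paper omits.
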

\begin{proof}
See \cite[Theorem 9.1]{IwaKow}
\end{proof}

\begin{rem}
\label{MVT_rem}
Notice that the main term in Lemma \ref{MVT} corresponds to the contribution from the diagonal terms (as seen by expanding the square and integrating). Notice further that the Mean Value Theorem (MVT) is exceptionally powerful when $N=o(T)$: in this case, the integral is bounded above by the contribution from the diagonal terms and this is best possible.

In their work on multiplicative functions in short intervals, Matom{\"a}ki and Radziwi{\l}{\l} use the MVT in the range $T=o(N)$, which, together with the decomposition of their Dirichlet polynomials, provides a small saving in most cases; see Section 2.1 of \cite{MatRad_Mult}. In this paper, we are interested in the opposite range and decompose our Dirichlet polynomials into two sums, one of which has length $N=o(T)$ and another which can be handled trivially (at least for small $h$). 
\end{rem}

We need two more preliminary results. The first is a pointwise bound on $\sum_{X \leq n \leq 2X}\lambda(n)/n^{\frac{1}{2}+it},$ which allows us to remove the contribution from the small values of $t$ in our average value (this can be thought of as the analogous result to Lemma 1 in \cite{MatRad_Liou}). The second is an analogue of Lemma 2 in \cite{MatRad_Mult}: we need a pointwise bound on sums of the form $\sum_{P\leq p \leq 2P}1/p^{\frac{1}{2}+\frac{1}{\log X}+it},$ for large values of $t$.
\begin{lem}
\label{Small_t}
Assume RH. Then,
\begin{align*}
    \sum_{X \leq n \leq 2X}\frac{\lambda(n)}{n^{\frac{1}{2}+it}}\ll_\epsilon (1+|t|)^\epsilon X^\epsilon,
\end{align*}
for all $\epsilon>0$ and all $t\in \R$.
\end{lem}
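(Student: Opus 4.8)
The plan is to deduce the pointwise bound on $\sum_{X\leq n\leq 2X}\lambda(n)/n^{1/2+it}$ from the RH-conditional estimate $\sum_{n\leq x}\lambda(n)\ll x^{1/2+\epsilon}$ (equivalently, the bound on $\sum_{n\leq x}\lambda(n)/n^\sigma$ near the critical line) via partial summation, while keeping careful track of the dependence on $t$. First I would recall that under RH one has, for any $\epsilon>0$, the estimate $L(\sigma+it):=\sum_{n\leq x}\lambda(n)n^{-it}\ll_\epsilon x^{\epsilon}(1+|t|)^{\epsilon}$ uniformly for $x\geq 1$; this is the standard ``Lindel\"of-on-the-line'' consequence of RH applied to $\zeta(2s)/\zeta(s)=\sum_n \lambda(n)n^{-s}$, obtained by Perron's formula (Lemma \ref{EffPer}) and a contour shift to $\Re s = 1/2$, using the RH bound $1/\zeta(s)\ll (|t|+1)^\epsilon$ and $\zeta(2s)\ll(|t|+1)^\epsilon$ just to the right of the half-line, together with a convexity/Chebyshev treatment of the truncation error. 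The logarithmic factors that would otherwise appear can be absorbed into $X^\epsilon(1+|t|)^\epsilon$.

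Concretely, I would write $S(x):=\sum_{n\leq x}\lambda(n)$ is \emph{not} directly what we want because of the twist by $n^{-it}$, so instead set $T(x):=\sum_{n\leq x}\lambda(n)n^{-it}$ and note that the target sum is $\sum_{X\leq n\leq 2X}\lambda(n)n^{-1/2-it} = \int_X^{2X} u^{-1/2}\,dT(u) = \big[u^{-1/2}T(u)\big]_X^{2X} + \tfrac12\int_X^{2X} u^{-3/2}T(u)\,du$ by partial summation (Abel summation / Riemann–Stieltjes). Both boundary terms and the integral are then controlled by the uniform bound $T(u)\ll_\epsilon u^\epsilon(1+|t|)^\epsilon$ for $u\in[X,2X]$, which yields $X^{-1/2+\epsilon}(1+|t|)^\epsilon$; replacing $\epsilon$ suitably and noting $X^{-1/2+\epsilon}\ll X^\epsilon$ gives the claimed $\ll_\epsilon (1+|t|)^\epsilon X^\epsilon$. (Alternatively, and perhaps more cleanly, one applies Perron's formula directly to the partial sum $\sum_{X\leq n\leq 2X}\lambda(n)n^{-1/2-it}$ as a difference of two truncated contour integrals of $\tfrac{\zeta(2s)}{\zeta(s)}\cdot\tfrac{(2X)^{s}-X^{s}}{s}$ shifted to $\Re s = \epsilon$, i.e. $\Re(s+1/2+it)=1/2+\epsilon$.)

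The only genuine input is the RH bound for $1/\zeta$ near the critical line, namely $1/\zeta(\sigma+iu)\ll_\epsilon (|u|+2)^\epsilon$ uniformly for $\sigma\geq 1/2+ 1/\log(|u|+2)$, which is classical (Titchmarsh, Chapter 14); everything else is bookkeeping with Perron truncation errors and the choice of truncation height $T$ as a small power of $X(|t|+1)$. The main obstacle — really the only point requiring care — is making the $t$-dependence uniform and clean: the contour integral picks up $\zeta(2(s+1/2+it))=\zeta(2s+1+2it)$ evaluated near $\Re(2s+1)=1$, i.e. near the line $\Re = 1$ but \emph{shifted vertically by $2t$}, so one must invoke the bound $\zeta(1+\epsilon+iv)\ll_\epsilon (|v|+2)^\epsilon$ at height $v\approx 2t$, and similarly $1/\zeta$ at height $\approx t$; keeping these heights and the horizontal truncation $T$ from interacting badly (e.g. choosing $T = (X(|t|+1))^{\epsilon}$ and absorbing the resulting $\log$'s) is the delicate step. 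Once that is set up, the estimate follows immediately, and since the bound is needed only to discard the small-$|t|$ portion of the mean-value integral in the proof of Theorem \ref{main_thm}, any polynomial-in-$(\log)$ loss would in fact suffice, so we do not need to optimize.
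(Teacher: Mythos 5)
Your parenthetical ``alternative'' route --- applying Perron directly to $\sum_{X\leq n\leq 2X}\lambda(n)n^{-1/2-it}$ via $\frac{\zeta(2s+1+2it)}{\zeta(s+1/2+it)}\cdot\frac{(2X)^s-X^s}{s}$ and shifting to $\Re s=\epsilon$ --- is exactly the paper's proof (the paper takes the truncation height $T=X^2$ to kill the Perron error, then uses $\zeta(\sigma+iv)\ll 1/(\sigma-1)$ for $\sigma>1$ and $1/\zeta(s)\ll|\Im s|^\epsilon$ for $\Re s\geq\tfrac12+\epsilon$ under RH). So the overall strategy is sound and matches the source.

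However, your primary route contains a genuinely false intermediate claim: the assertion that RH gives $\sum_{n\leq x}\lambda(n)n^{-it}\ll_\epsilon x^{\epsilon}(1+|t|)^{\epsilon}$. This fails already at $t=0$, where $\sum_{n\leq x}\lambda(n)$ is known to be $\Omega(\sqrt{x})$; and the mechanism you describe cannot produce it, since after shifting the Perron contour to $\Re s=\tfrac12+\epsilon$ the factor $x^s/s$ contributes $|x^s|=x^{1/2+\epsilon}$, not $x^{\epsilon}$. The correct statement is $\sum_{n\leq x}\lambda(n)n^{-it}\ll_\epsilon x^{1/2+\epsilon}(1+|t|)^{\epsilon}$, which is precisely what your Perron-plus-contour-shift derivation yields. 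Fortunately the error is self-healing: feeding the correct bound into your Abel summation gives $X^{-1/2}\cdot X^{1/2+\epsilon}(1+|t|)^{\epsilon}=X^{\epsilon}(1+|t|)^{\epsilon}$, which is the lemma (rather than the too-strong $X^{-1/2+\epsilon}(1+|t|)^{\epsilon}$ you wrote down). With that one correction --- and it is worth noting that the $n^{-1/2}$ weight in the summand is doing real work here, supplying exactly the $X^{-1/2}$ needed to cancel the unavoidable $X^{1/2}$ from the critical line --- either of your two routes is a complete and correct proof.
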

\begin{proof}
By Lemma 3.12 of \cite{Titch}, we have that
\begin{align*}
    \sum_{X \leq n \leq 2X}\frac{\lambda(n)}{n^{\frac{1}{2}+it}}
    &=
    \frac{1}{2\pi i}\int_{2-iT}^{2+iT}\frac{\zeta(2s+1+2it)}{\zeta(s+\frac{1}{2}+it)}\frac{(2X)^s-X^s}{s}\dd s 
    +
    \mathcal{O}\Big(\frac{X^2}{T}\Big).
\end{align*}
Given $\epsilon>0$ and assuming RH, the function $\zeta(2(s+\frac{1}{2}+it))/\zeta(s+\frac{1}{2}+it)$ is analytic for $\Re(s)\geq \epsilon.$ Shifting the contour to the edge of this region, we get that
\begin{align*}
    \sum_{X \leq n \leq 2X}\frac{\lambda(n)}{n^{\frac{1}{2}+it}}
    &=
    \frac{-1}{2\pi i}\Biggr(
    \int_{2+iT}^{\epsilon+iT}
    + 
    \int_{\epsilon+iT}^{\epsilon-iT}
    +
    \int_{\epsilon-iT}^{2-iT}\Biggr)
    \Big(\frac{\zeta(2s+1+2it)}{\zeta(s+\frac{1}{2}+it)}\frac{(2X)^s-X^s}{s}\dd s \Big)
    +
    \mathcal{O}\Biggr(\frac{X^2}{T} \Biggr).
\end{align*}
Then, using the facts that $\zeta(s)\ll 1/(\Re(s)-1)$, for $\Re(s)>1$, and $1/\zeta(s)\ll |\Im(s)|^\epsilon$, for $\Re(s)\geq \frac{1}{2}+\epsilon,$ (see 14.2.6 in \cite{Titch}), we have that
\begin{align*}
    \sum_{X \leq n \leq 2X}\frac{\lambda(n)}{n^{\frac{1}{2}+it}}
    &\ll_\epsilon
    (|t|+T)^\epsilon X^\epsilon \int_{-T}^T  \frac{\dd y}{\sqrt{\epsilon^2+y^2}} + (|t|+T)^\epsilon\frac{X^2}{T}\\
    &\ll_\epsilon
    (|t|+T)^\epsilon(X^\epsilon \log T + \frac{X^2}{T}).
    \end{align*}
    Taking $T=X^2$, this boils down to
    \begin{align*}
        \sum_{X \leq n \leq 2X}\frac{\lambda(n)}{n^{\frac{1}{2}+it}}
        \ll_\epsilon
        (|t|+X^2)^\epsilon X^\epsilon
        \ll_\epsilon
        (1+|t|)^\epsilon X^\epsilon,
    \end{align*}
as claimed.
\end{proof}

\begin{lem}
\label{Large_t}
Assume RH and suppose $P\leq X$. Then,  
\begin{align*}
    \sum_{P\leq p \leq 2P}\frac{1}{p^{\frac{1}{2}+\frac{1}{\log X}+it}}
    \ll \log X,
\end{align*}
uniformly for $X^\frac{1}{2}\leq |t|\leq X.$
\end{lem}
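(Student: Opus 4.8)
The plan is to write the prime sum as a contour integral of $\log\zeta$ and then push the contour just past the critical line, where RH supplies holomorphy. First, since $2\bigl(\tfrac12+\tfrac1{\log X}\bigr)>1$, the prime powers $p^{k}$ with $k\ge 2$ contribute $O(1)$, so with $\sigma:=\tfrac12+\tfrac1{\log X}$ it suffices to bound $\sum_{P\le n\le 2P}\tfrac{\Lambda(n)}{\log n}\,n^{-\sigma-it}$. Applying Lemma \ref{EffPer} to the two indicators $\one_{n\ge P}$ and $\one_{n\ge 2P}$ along the line $\Re s=\kappa:=\tfrac12+\tfrac1{\log X}$ (so that $\kappa+\sigma>1$), with truncation height $T:=\sqrt P\le X^{1/2}\le|t|$, and using $\sum_{n}\tfrac{\Lambda(n)}{\log n}n^{-w}=\log\zeta(w)$ for $\Re w>1$, I get
\[
\sum_{P\le n\le 2P}\frac{\Lambda(n)}{\log n}\,n^{-\sigma-it}=\frac{1}{2\pi i}\int_{\kappa-iT}^{\kappa+iT}\log\zeta(s+\sigma+it)\,\frac{(2P)^{s}-P^{s}}{s}\,\dd s+O(\log X),
\]
the Perron error being negligible because $T$ is a fixed power of $X$.

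Next I would move the contour to the line $\Re s=-\tfrac1{2\log X}$. Assuming RH, $\log\zeta(w)$ is holomorphic in $\Re w>\tfrac12$ apart from the logarithmic singularity at $w=1$; on the new line $\Re(s+\sigma+it)=\tfrac12+\tfrac1{2\log X}>\tfrac12$, the point $s+\sigma+it=1$ occurs at $\Im s=-t$ and so lies outside the box since $|t|>T$, while the zeros $s+\sigma+it=\rho$ occur at $\Re s=-\tfrac1{\log X}$, to the left of both lines. The only pole crossed is $s=0$, but its residue is $\log\zeta(\sigma+it)\cdot\bigl((2P)^{0}-P^{0}\bigr)=0$; this vanishing — a consequence of working with the window $[P,2P]$ rather than an initial segment of primes — is exactly what makes the argument go through. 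On the two horizontal segments one has $\bigl|\tfrac{(2P)^{s}-P^{s}}{s}\bigr|\ll P^{\kappa}/T\ll1$ and length $\ll1/\log X$, so they contribute $O(1)$. On the shifted vertical line, writing $s=-\tfrac1{2\log X}+iv$, one has $\bigl|\tfrac{(2P)^{s}-P^{s}}{s}\bigr|\ll(1+|v|)^{-1}$, so the remaining integral is $\ll\int_{-T}^{T}\bigl(1+|v|\bigr)^{-1}\,\bigl|\log\zeta\bigl(\tfrac12+\tfrac1{2\log X}+i(t+v)\bigr)\bigr|\,\dd v$, which I would estimate by splitting $v$ dyadically and, on each block, combining the pointwise bound $|\log\zeta(\tfrac12+\delta+iu)|\ll\log|u|/\log\log|u|$ (Littlewood, on RH) with a second-moment estimate for $\log\zeta$ near the critical line (Selberg); Cauchy–Schwarz on each block and summation over the $O(\log X)$ relevant scales then yields $\ll\log X$.

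The heart of the matter is this last step: just to the right of the critical line $\log\zeta$ is genuinely large — on RH only $O(\log X/\log\log X)$ pointwise — so one must extract cancellation from its weighted mean value rather than estimate it trivially, and it is the quantity $\log X$ arising here that propagates to the final bound. Two subsidiary points also need care: the truncation height $T$ must be a large enough power of $X$ for the Perron error to be admissible, yet small enough ($T\le|t|$) for the shifted contour to avoid the pole of $\log\zeta$ at $1$; and the cut-off $[P,2P]$ is sharp rather than smooth, which is precisely what forces the slowly decaying weight $(1+|v|)^{-1}$ (and hence the logarithmic loss). For the small range $P\le(\log X)^{2}$ one needs none of this: the trivial bound $\sum_{P\le p\le 2P}p^{-1/2}\ll\sqrt P/\log P$ already gives $\ll\log X$.
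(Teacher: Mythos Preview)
Your route via $\log\zeta$ is genuinely different from the paper's, which writes $p^{-1/\log X}=(\log p)\int_{1/\log X}^{1/2}p^{-\alpha}\,\dd\alpha+O(1)$, applies Perron to $\sum\Lambda(n)n^{-1/2-\alpha-it}$ against $-\zeta'/\zeta$, and shifts the contour \emph{past} the critical line. The residues at the zeros then give $\sum_{|\gamma-t|\le \sqrt P/2}(1+|t-\gamma|)^{-1}\ll(\log P)\log X$, and the subsequent $\alpha$--integration of $P^{-\alpha}\log P$ removes one logarithm, landing exactly on $\ll\log X$. Your idea of staying just right of the half-line with $\log\zeta$ (so that no residues appear, thanks to the vanishing of $(2P)^0-P^0$) is elegant, but the vertical integral does not give what you claim.

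Two small slips first. The horizontal segments have length $\kappa-(-1/(2\log X))\asymp 1/2$, not $1/\log X$; this is harmless, since on RH $|\log\zeta|\ll\log X/\log\log X$ pointwise there and $P^{\kappa}/T\ll1$, so those pieces are $\ll\log X/\log\log X$. Also take $T=\sqrt P/2$ rather than $\sqrt P$ (as the paper does) so that the shifted rectangle genuinely avoids the branch point of $\log\zeta$ at $w=1$ in the boundary case $P=X$, $|t|=X^{1/2}$.

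The real issue is the vertical contribution
\[
\int_{-T}^{T}\frac{\bigl|\log\zeta\bigl(\tfrac12+\tfrac{1}{2\log X}+i(t+v)\bigr)\bigr|}{1+|v|}\,\dd v.
\]
At distance $\asymp 1/\log X$ from the half-line, Selberg's moments give $\int_{I}|\log\zeta|^{2}\ll|I|\log\log X$ on any interval $I$ of length $\ge1$ at height $\ll X$, so Cauchy--Schwarz on each dyadic block $[V,2V]$ yields $\ll(\log\log X)^{1/2}$; summing over the $O(\log X)$ blocks gives $\ll(\log X)(\log\log X)^{1/2}$, not $\ll\log X$. Using higher moments or the pointwise bound $\ll\log X/\log\log X$ on a large-value set does no better: the Selberg central limit theorem forces $\int_{U}^{2U}|\log\zeta(\tfrac12+iu)|\,\dd u\gg U(\log\log U)^{1/2}$, so the $(\log\log X)^{1/2}$ loss is intrinsic to bounding $|\log\zeta|$ on a line at distance $\asymp 1/\log X$ from $\Re w=\tfrac12$ against the slowly decaying weight $(1+|v|)^{-1}$. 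Thus your argument proves $\ll(\log X)(\log\log X)^{1/2}$, which is perfectly usable downstream (it costs only a $\log\log X$ in Theorem~\ref{main_thm}) but falls just short of the lemma as stated. The paper's $\zeta'/\zeta$ route avoids this because the explicit sum over zeros is bounded directly, and the extra $\alpha$--integration eats the spare logarithm.
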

\begin{proof}
Note that 
\begin{align}
\label{int-alpha}
    \sum_{P\leq p \leq 2P}\frac{1}{p^{\frac{1}{2}+\frac{1}{\log X}+it}}
&=
    \int_{1/\log X}^{1/2}
    \sum_{P\leq n \leq 2P}\frac{\Lambda(n)}{n^{\frac{1}{2}+\alpha+it}}
    \dd \alpha
    +
    \mathcal{O}\left(1\right).
 \end{align}
Applying Perron's Formula (Lemma \ref{EffPer}) to the integrand, with $\kappa := {1}/{2}-\alpha +1/\log X>0$ and $T:=P^{\frac{1}{2}}/2$, we have that
\begin{align*}
    \sum_{P\leq n \leq 2P}\frac{\Lambda(n)}{n^{\frac{1}{2}+\alpha+it}}
&=
    \frac{1}{2 \pi i}\int_{{1}/{2}-\alpha + 1/\log X - iT}^{{1}/{2} - \alpha + 1/\log X +iT} -\frac{\zeta^\prime}{\zeta}(s + \frac{1}{2} + \alpha + it)\frac{(2P)^s-P^s}{s}\dd s
    +
    \mathcal{O}\left(P^{-\alpha}\log X\right).
 \end{align*}
 A standard contour shift argument, using Chapters 12 and 13 of \cite{Mont-Vaughan}, shows that this last expression is $\ll P^{-\alpha}(\log P)\log X$. Upon inserting this bound back into (\ref{int-alpha}), and integrating over $\alpha$, we get the desired result. Below, we provide some details for convenience to the reader.
 
Assuming RH and shifting the contour to the line $\Re(s) = -3/2-\alpha=:\sigma_0$, we have that\footnote{Strictly speaking, we need to choose $T$ in such a way that the horizontal line segments in the contour (shifted by $t$) are bounded away from ordinates $\gamma$ of $\zeta$; this can be done via Lemma 12.2 of \cite{Mont-Vaughan}.}
\begin{align*}
        \sum_{P\leq n \leq 2P}\frac{\Lambda(n)}{n^{\frac{1}{2}+\alpha+it}}
        &=
        \frac{-1}{2\pi i}\left(
        \int_{\kappa+iT}^{\sigma_0+iT}
        + 
        \int_{\sigma_0+iT}^{\sigma_0-iT}
        +
        \int_{\sigma_0-iT}^{\kappa-iT}\right)
        \left(- \frac{\zeta^\prime}{\zeta}(s+\frac{1}{2}+\alpha+it)\frac{(2P)^s-P^s}{s}\dd s \right)\\
        &\qquad\qquad+
        \sum_{\substack{{\rho}\\{|\gamma-t|\leq T}}}\frac{(2P)^{-\alpha-it+i\gamma}-P^{-\alpha -it +i\gamma}}{-\alpha - it +i\gamma} + \mathcal{O}\left(P^{-\alpha}\log X\right),
\end{align*}
where the sum is over the non-trivial zeros, $\rho={1}/{2}+i\gamma$, of $\zeta$ (counted with multiplicity). Notice that the sum over $\rho$ corresponds to the residues of the integrand at $s=\rho-{1}/{2}-\alpha-it$. Notice further that we do not pick up the pole of $\zeta$ at $1$, as $\Im(s)+t$ is bounded away from $0$ (recall that $X^\frac{1}{2}\leq|t|\leq X$ with $T=P^\frac{1}{2}/2$ and $P\leq X$).

To deal with the vertical integral, we use the fact that ${\zeta^\prime(s)}/{\zeta}(s)\ll \log (|s|+1)$ uniformly for $\Re(s)\leq -1$, provided $s$ is not close to an even integer (i.e., provided we are not close to a trivial zero of $\zeta$); see Lemma 12.4 of \cite{Mont-Vaughan}. Using the above and bounding the vertical integral trivially produces an admissible error.

For the horizontal integrals, write $s=\sigma \pm iT$ and note that
\begin{align*}
    \left|\int_{\sigma_0 \pm iT}^{\kappa \pm iT}\frac{\zeta^\prime}{\zeta}(\sigma + \frac{1}{2}+\alpha +i(t\pm T))\frac{(2P)^{\sigma\pm iT}-P^{\sigma \pm iT}}{\sigma \pm iT}\dd \sigma\right|
    &\ll
    \frac{P^{\frac{1}{2}-\alpha}}{T}\int_{-1}^{1+1/\log X}\left|\frac{\zeta^\prime}{\zeta}(\sigma + i(t\pm T))\right|\dd \sigma\\
    &\ll 
    P^{-\alpha}\int_{-1}^{1+1/\log X}\left|\frac{\zeta^\prime}{\zeta}(\sigma + i(t\pm T))\right|\dd \sigma
\end{align*}
which follows by making the change of variables $\sigma \mapsto \sigma - {1}/{2}-\alpha$, taking a pointwise bound on the integrand, and recalling that $\kappa = {1}/{2}-\alpha-it$, $\sigma_0=-3/2-\alpha$, and $T=P^{\frac{1}{2}}/2$. From the functional equation, it suffices to consider the integral over $\sigma\in[{1}/{2},1+1/\log X]$ alone, which we break into three intervals, according to the bounds one can get for ${\zeta^\prime}/{\zeta}$:
\begin{enumerate}
    \item By Lemma 12.2 of \cite{Mont-Vaughan}, we know that $\frac{\zeta^\prime}{\zeta}(\sigma + i(t\pm T))\ll (\log X)^2,$ uniformly for $-1\leq \sigma\leq 2$; in particular,
    \begin{align*}
         P^{-\alpha}\int_{{1}/{2}}^{{1}/{2}+{1}/{\log X}}\left|\frac{\zeta^\prime}{\zeta}(\sigma + i(t\pm T))\right|\dd \sigma
         \ll
         P^{-\alpha}\log X.
    \end{align*}
    \item By Lemma 13.20 of \cite{Mont-Vaughan}, we know that 
    \begin{align*}
        \frac{\zeta^\prime}{\zeta}(\sigma +i(t\pm T)) 
        =
        \sum_{\substack{{\rho}\\{|\gamma-(t\pm T)|\leq 1/\log\log ((|t\pm T|+4)}}}\frac{1}{\sigma-1/2 + i(t\pm T -\gamma)}
        +\mathcal{O}(\log X),
    \end{align*}
    uniformly for $|\sigma-1/2|\leq 1/\log\log(|t\pm T|+4)$. Using the trivial bound 
    $$\frac{1}{|{\sigma-1/2 + i(t\pm T -\gamma)}|}\leq \frac{1}{|\sigma-1/2|},$$
    and integrating over $\sigma$, we then have that
    \begin{align*}
        P^{-\alpha}\int_{{1}/{2}+1/\log X}^{{1}/{2}+1/{\log\log (|t\pm T|+4)}}\left|\frac{\zeta^\prime}{\zeta}(\sigma + i(t\pm T))\right|\dd \sigma
       &\ll
        P^{-\alpha}\log X,
    \end{align*}
    which follows from the fact that the integral over $\sigma$ is bounded by $\log\log X$ and from the fact that there are $\ll \log X/\log\log X$ zeros in each window of length $1/\log\log X$ (see Lemma 13.19 of \cite{Mont-Vaughan}).
    \item Finally, for $1/2+1/\log\log (|t\pm T|+4)\leq \sigma \leq 1+1/\log X$, we use the fact that 
    \begin{align*}
        \frac{\zeta^\prime}{\zeta}(\sigma + i(t\pm T))\ll (\log X)^{2-2\sigma}\log\log X,
    \end{align*}
    uniformly for $\sigma$ in the desired range (see Corollary 13.14 of \cite{Mont-Vaughan}), which yields:
    \begin{align*}
            P^{-\alpha}\int_{1/2+1/\log\log (|t\pm T|+4)}^{1+1/\log X}
            \left|\frac{\zeta^\prime}{\zeta}(\sigma + i(t\pm T))\right|\dd \sigma
            &\ll
            P^{-\alpha}(\log\log X) \int_{1/2+1/\log\log(|t\pm T|+4)}^{1+1/\log X}(\log X)^{2-2\sigma}\dd \sigma\\
            &\ll
            P^{-\alpha}\log X.
    \end{align*}
\end{enumerate}

In other words, a standard contour shift yields the following:
\begin{align*}
        \sum_{P\leq n \leq 2P}\frac{\Lambda(n)}{n^{\frac{1}{2}+\alpha+it}}
       &\ll
       P^{-\alpha}\sum_{\substack{{\rho}\\{|\gamma-t|\leq P^{\frac{1}{2}}/2}}}
       \left|\frac{2^{-\alpha-it+i\gamma}-1}{-\alpha - it +i\gamma}\right| 
       +
       P^{-\alpha}\log X\\
       &\ll
       P^{-\alpha}
       \sum_{\substack{{\rho}\\{|\gamma-t|\leq P^{\frac{1}{2}}/2}}}
       \frac{1}{1+|t-\gamma|} 
       +
       P^{-\alpha}\log X\\
       &\ll
       P^{-\alpha}(\log P)\log X,
\end{align*}
where the last line follows from the fact that there are $\ll \log X$ zeros in each interval of length $1$ (counted with multiplicity) and recalling that the sum over $\rho$ comes from the contribution of the residues in our contour integral. Upon substituting the last bound into (\ref{int-alpha}), and integrating over $\alpha$, we obtain the desired result.
\end{proof}

\begin{rem}
The proof of Theorem \ref{main_thm} can easily be adapted to a more general setting. For an arbitrary multiplicative function $f$, all we need is an analogue to Lemma \ref{Large_t}. Essentially, we are looking for square-root cancellation in the corresponding sum over primes:
\begin{align*}
    \sum_{P\leq p \leq 2P}f(p)p^{it}\ll P^\epsilon \Big(\sum_{P\leq p \leq 2P}\big(f(p)\big)^2 \Big)^\frac{1}{2}.
\end{align*}
For example, the above estimate is known to hold, assuming RH, for coefficients of automorphic forms and for multiplicative functions of the form $\mu(n) \lambda_\pi(n)$ or $\lambda_\pi(n)$, where  $\mu$ is the M{\"o}bius function and where the $\lambda_\pi(n)$'s are the coefficients of an automorphic representation $\pi$.
\end{rem}
\section{Initial Reductions}
\label{InitialRed}

In this section, we reduce our problem to that of bounding the mean square of a Dirichlet polynomial. We begin with the following standard lemma, which essentially follows from Perron's Formula (together with a few other tricks):
\begin{lem}[Plancherel]
\label{x-t}
For $1\leq h \leq X,
$\begin{align*}
    \frac{1}{X}\int_X^{2X} 
    \Big|\frac{1}{h} 
    \sum_{x\leq n \leq x+h}\lambda(n)
    \Big|^2 \dd x
    &\ll
    \frac{1}{X}
    \int_0^{X/h}\Big|
    \sum_{X\leq n \leq 4X}\frac{\lambda(n)}{n^{\frac{1}{2}+ it}}
    \Big|^2 \dd t
    +
    \max_{T>X/h}
    \frac{1}{hT}
    \int_T^{2T}\Big|
    \sum_{X\leq n \leq 4X}\frac{\lambda(n)}{n^{\frac{1}{2}+ it}}
    \Big|^2 \dd t.
\end{align*}
\end{lem}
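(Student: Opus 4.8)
The plan is to use Perron's formula to express $\frac1h\sum_{x\le n\le x+h}\lambda(n)$ as an integral over (a truncation of) the line $\Re s=\tfrac12$ of the Dirichlet polynomial $\FF(s):=\sum_{X\le n\le 4X}\lambda(n)n^{-s}$, to split that integral at height $|t|=X/h$, and to estimate the two resulting pieces by two different applications of Plancherel's theorem. Fix $T:=X^{10}$ and put $f(n):=\lambda(n)\one_{[X,4X]}(n)$, $A(x):=\sum_{n\le x}f(n)$; since $h\le X$ we have $[x,x+h]\subseteq[X,4X]$ for all $x\in[X,2X]$, so $\sum_{x\le n\le x+h}\lambda(n)=A(x+h)-A(x)$ for almost every such $x$. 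Applying Lemma \ref{EffPer} with $\kappa=\tfrac12$ to each summand and summing yields
\[
A(x)=\frac{1}{2\pi}\int_{-T}^{T}\FF\!\left(\tfrac12+it\right)\frac{x^{1/2+it}}{1/2+it}\,\dd t+O\!\left(E(x)\right),\qquad E(x)\ll\sum_{X\le n\le 4X}\frac{(x/n)^{1/2}}{\max\{1,T|\log(x/n)|\}},
\]
and a routine computation (using that $T$ is a large power of $X$) gives $\int_X^{3X}E(x)^2\,\dd x\ll X^{-8}$, which is negligible after forming $\tfrac1h(A(x+h)-A(x))$ and integrating in $x$. Using $\dfrac{(x+h)^{1/2+it}-x^{1/2+it}}{1/2+it}=\displaystyle\int_x^{x+h}y^{-1/2+it}\,\dd y$, it therefore suffices to bound $\tfrac1X\int_X^{2X}|I_1(x)|^2\,\dd x$ and $\tfrac1X\int_X^{2X}|I_2(x)|^2\,\dd x$ by the two terms on the right-hand side, where $I_1(x)$ and $I_2(x)$ denote the contributions of $|t|\le X/h$ and of $X/h<|t|\le T$, respectively, to $\frac{1}{2\pi h}\int\FF(\tfrac12+it)\int_x^{x+h}y^{-1/2+it}\,\dd y\,\dd t$.

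For $I_1$, interchanging the two integrals gives $I_1(x)=\frac1h\int_x^{x+h}y^{-1/2}\psi(y)\,\dd y$ with $\psi(y):=\frac{1}{2\pi}\int_{|t|\le X/h}\FF(\tfrac12+it)\,y^{it}\,\dd t$. By Cauchy--Schwarz, $|I_1(x)|^2\le\frac1h\int_x^{x+h}y^{-1}|\psi(y)|^2\,\dd y$; integrating over $x\in[X,2X]$ and switching the order of integration (for each $y$ the admissible $x$ form a set of measure $\le h$, and only $y\in[X,3X]$ occur) gives $\int_X^{2X}|I_1(x)|^2\,\dd x\le\int_X^{3X}y^{-1}|\psi(y)|^2\,\dd y$. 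Substituting $y=e^v$ turns the right side into $\int_{\log X}^{\log 3X}|\psi(e^v)|^2\,\dd v$, and since $v\mapsto\psi(e^v)$ is, up to a constant, the inverse Fourier transform of $t\mapsto\FF(\tfrac12+it)\one_{\{|t|\le X/h\}}$, Plancherel bounds this by $\ll\int_{|t|\le X/h}|\FF(\tfrac12+it)|^2\,\dd t$. Dividing by $X$ and using $|\FF(\tfrac12-it)|=|\FF(\tfrac12+it)|$ yields $\ll\frac1X\int_0^{X/h}|\FF(\tfrac12+it)|^2\,\dd t$, the first term.

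For $I_2$, note that $I_2(x)=\frac1h\big(\Phi(x+h)-\Phi(x)\big)$ with $\Phi(x):=\frac{1}{2\pi}\int_{X/h<|t|\le T}\frac{\FF(1/2+it)}{1/2+it}\,x^{1/2+it}\,\dd t$, so that $|I_2(x)|^2\le\frac{2}{h^2}\big(|\Phi(x+h)|^2+|\Phi(x)|^2\big)$ and $\int_X^{2X}|I_2(x)|^2\,\dd x\ll\frac1{h^2}\int_X^{3X}|\Phi(x)|^2\,\dd x$. Writing $\Phi(x)=x^{1/2}\phi(x)$, substituting $x=e^v$, and applying Plancherel to $v\mapsto\phi(e^v)$ — the inverse Fourier transform (up to a constant) of $t\mapsto\frac{\FF(1/2+it)}{1/2+it}\one_{\{X/h<|t|\le T\}}$ — gives $\int_X^{3X}|\Phi(x)|^2\,\dd x\ll X^2\int_{|t|>X/h}\frac{|\FF(1/2+it)|^2}{t^2}\,\dd t$ (using $|1/2+it|\asymp|t|$ on the range $|t|>X/h\ge1$). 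Decomposing $|t|>X/h$ into dyadic blocks, the factor $t^{-2}$ makes the geometric series converge, so this is $\ll\frac Xh\max_{T'>X/h}\frac1{T'}\int_{T'}^{2T'}|\FF(\tfrac12+it)|^2\,\dd t$; dividing by $X$ and by $h$ once more yields the second term.

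The only genuine idea is the split at $|t|=X/h$ combined with the two complementary uses of Plancherel: at low frequencies one regards the short sum as a short average of the "band-limited" function $y^{-1/2}\psi(y)$, while at high frequencies one regards it as a finite difference of $\Phi$, whose Fourier transform in $\log x$ is clean. The extra factor $1/(1/2+it)$ that is present only in the high-frequency piece $\Phi$ supplies exactly the decay $t^{-2}$ needed to sum the dyadic blocks without any logarithmic loss, which is what makes the clean exponent in the lemma possible; I expect this (rather than the Perron bookkeeping, the $\le$-versus-$<$ discrepancy at the endpoints, or the elementary interchanges of integration) to be the point requiring the most care.
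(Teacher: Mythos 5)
Your argument is correct and is essentially the standard proof of this lemma: the paper itself only cites Lemma 14 of Matom\"aki--Radziwi{\l}{\l} (equivalently, Lemma 4 of their Liouville paper), and the proof there proceeds exactly as you do --- Perron on the half-line, a split at $|t|=X/h$, Plancherel in the variable $\log y$ for the low frequencies, and Plancherel applied to the finite difference of $\Phi$ with the $|1/2+it|^{-2}$ decay summing the dyadic blocks for the high frequencies. The only points to polish are routine: justifying that the additive $O(X^{-9})$ Perron error can be absorbed into the $\ll$ (the second term on the right is $\gg 1/h$ by the Mean Value Theorem with $T$ large), and the endpoint $T'=X/h$ versus $T'>X/h$ in the dyadic decomposition.
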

\begin{proof}
See \cite[Lemma 14]{MatRad_Mult} (equivalently, \cite[Lemma 4]{MatRad_Liou}).
\end{proof}

Using Lemma \ref{Small_t}, we can remove the contribution from the small values of $t$ in the RHS of Lemma \ref{x-t}: for any $\epsilon>0$,
\begin{align*}
    \frac{1}{X}
    \int_0^{X^{\frac{1}{2}}} 
    \Big|
    \sum_{X \leq n \leq 4X}\frac{\lambda(n)}{n^{\frac{1}{2}+it}}
    \Big|^2
    \dd t
    \ll_\epsilon
    X^{\epsilon-\frac{1}{2}},
\end{align*}
which follows by bounding the integrand pointwise. Thus, Theorem \ref{main_thm} follows from Lemma \ref{x-t} once we show that
\begin{align}
\label{goal}
    \frac{1}{hT}
    \int_{X^\frac{1}{2}}^T 
    \Big|
    \sum_{X \leq n \leq 4X}\frac{\lambda(n)}{n^{\frac{1}{2}+it}}
    \Big|^2
    \dd t
    \ll
    \frac{(\log X)^6}{h},
\end{align}
for $T\geq X/h$, where we have assumed, w.l.o.g., that $h\leq X^{\frac{1}{2}}$ (this is done to avoid the degenerate case when $T=X/h\leq X^{1/2}$).  

For $T>X$, the Mean Value Theorem (Lemma \ref{MVT}) immediately gives the desired bound:
\begin{align*}
        \frac{1}{hT}
    \int_{X^\frac{1}{2}}^T 
    \Big|
    \sum_{X \leq n \leq 4X}\frac{\lambda(n)}{n^{\frac{1}{2}+it}}
    \Big|^2
    \dd t
    \ll
    \frac{1}{hT}(T+X)\sum_{X \leq n \leq 4X}\frac{1}{n}\ll \frac{1}{h}.
\end{align*} 
It remains to prove that (\ref{goal}) holds for $X/h\leq T\leq X$. Our main tool is still the Mean Value Theorem, but this is only winning if we can shorten the length our sum: we should think of $T\approx X/h$ and recall that the MVT is best possible if the length of the corresponding Dirichlet polynomial is of size $N\approx T$; see Remark \ref{MVT_rem}. Since our Dirichlet polynomial has length $X$, our goal is to split the sum over $X \leq n \leq 4X$ into two sums, one of which has length $\approx X/h$ and another which can be handled separately. This is done by considering the integers $X \leq n \leq 4X$ which have a prime factor $p>h$: by writing such integers as $n=pm$, and then factoring out the sum over $p$, the sum over $m \approx X/p$ that remains has length $\ll X/h$; using the MVT on the sum over $m$ is now winning. In Section \ref{SmallPrimes}, we deal with the remaining integers, all of whose primes factors are $\leq h$. Fortunately for us, there are few of these so-called $h$-smooth integers, at least for small $h$, so that the MVT can be applied directly to give us what we want. Getting square-root cancellation for larger $h$ will require some new ideas; this is ongoing work.
\section{Integers with large prime factors}
\label{LargePrimes}

For the integers $X \leq n \leq 4X$ which have at least one prime factor $p>h$, we have the following:

\begin{prop}
\label{Rough}
Assume RH. If $X/h\leq T \leq X$, then
\begin{align*}
\frac{1}{hT}
\int_{X^\frac{1}{2}}^T\Big|
\sum_{\substack{{X \leq n \leq 4X}\\{\exists p>h:p|n}}}
\frac{\lambda(n)}{n^{\frac{1}{2}+it}}
\Big|^2\dd t
\ll
\frac{(\log X)^{6}}{h}.
\end{align*}
\end{prop}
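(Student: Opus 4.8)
The plan is to isolate, for each $n$ in the sum, a prime factor $p>h$, factor the associated Dirichlet polynomial as (a pure sum over $p$) times (a shorter sum over the cofactor $m\asymp X/p$), bound the first factor pointwise by Lemma~\ref{Large_t} and the second by the Mean Value Theorem, and then track the logarithmic losses carefully so that they add up to exactly six.

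Write $S(t):=\sum_{\substack{X\le n\le 4X\\ \exists p>h:\,p\mid n}}\lambda(n)n^{-1/2-it}$ and, for $n\le 4X$, set $\omega_h(n):=\#\{p\mid n:p>h\}\le R:=\lfloor\log(4X)/\log h\rfloor$. One stratifies $S=\sum_{k=1}^{R}S_k$ with $S_k(t):=\sum_{X\le n\le 4X,\ \omega_h(n)=k}\lambda(n)n^{-1/2-it}$. Since $\lambda$ is completely multiplicative, $k\lambda(n)=\sum_{p\mid n,\,p>h}\lambda(n)$, so $kS_k(t)=\sum_{p>h}\sum_{X\le n\le 4X,\ p\mid n,\ \omega_h(n)=k}\lambda(n)n^{-1/2-it}$; writing $n=pm$, discarding the contributions of $p^2\mid n$ and of $p\mid m$ (each of size $\ll X^{1/2}/h$, hence negligible after squaring and integrating), and grouping $p$ into the $\ll\log X$ dyadic ranges $P\le p\le 2P$ with $h<P\le X$ (the ranges $P>X$ force $m=n/p\in\{1,2,3\}$, contribute only to $k=1$, and are bounded pointwise by $\ll\log X$ via Lemma~\ref{Large_t}), one obtains
\[
kS_k(t)=-\sum_{h<P\le X}\ \sum_{P\le p\le 2P}\frac{1}{p^{1/2+it}}\ \sum_{\substack{X/p\le m\le 4X/p\\ \omega_h(m)=k-1}}\frac{\lambda(m)}{m^{1/2+it}}\ +\ (\text{negligible}).
\]
Crucially one retains the factor $1/k$: since $\tfrac1T\int|S_k|^2=\tfrac1{k^2}\cdot\tfrac1T\int|kS_k|^2$, a Cauchy--Schwarz over $k$ with weights $w_k=\sqrt{\sigma_k}/k$ (with $\sigma_k$ as below) will produce $\big(\sum_k k^{-2}\big)\big(\sum_k\sigma_k\big)$ rather than the wasteful $R\sum_k\sigma_k$.

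The one real obstacle is that the inner sum over $m$ runs over the $p$-dependent interval $[X/p,4X/p]$, which blocks the factorization into a pure prime sum times a fixed Dirichlet polynomial — and a crude, cancellation-free treatment of the marginal values of $m$ fails badly. The decisive idea is to pre-truncate $m$ to the common window $[X/(2P),4X/P]$ (no loss, since $P\le p\le 2P$ and $X\le pm\le 4X$ force $m$ there) and to replace $\one[X\le pm\le 4X]$ by the quantitative Perron formula of Lemma~\ref{EffPer} at abscissa $\kappa:=1/\log X$ and truncation height $V:=X^{1/2}/(\log X)^2$. The choice of $V$ is the heart of the matter: the Perron truncation error is $\ll(\log X)^{3}$ pointwise (a routine bound on $\sum_{X/2\le n\le 8X}n^{-1/2}(n/X)^{\kappa}/\max\{1,V|\log(n/X)|\}\ll X^{1/2}(\log X)/V$), so it feeds only $\ll(\log X)^{6}/h$ into the final estimate; and since $V<X^{1/2}/2$, the Perron-induced shift $t\mapsto t-v$ with $|v|\le V$ keeps $t-v$ inside $[X^{1/2}/2,2X]$ for every $t\in[X^{1/2},T]$, which is within the (mildly extended) range of validity of Lemma~\ref{Large_t}. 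After interchanging summation and integration, the main part of $kS_k$ becomes $\sum_{h<P\le X}\frac1{2\pi}\int_{|v|\le V}g(\kappa+iv)\,A_P(t-v)\,B_{P,k}(t-v)\,\dd v$, where $g(s):=(X^{-s}-(4X)^{-s})/s$ obeys $|g(\kappa+iv)|\ll\min\{1,1/|v|\}$, the factor $A_P(\tau):=\sum_{P\le p\le 2P}p^{-(1/2-\kappa)-i\tau}$ is a pure prime sum, and $B_{P,k}(\tau):=\sum_{\substack{X/(2P)\le m\le 4X/P\\ \omega_h(m)=k-1}}\lambda(m)m^{-(1/2-\kappa)-i\tau}$ has length $\ll X/P\le X/h\le T$.

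From here the estimate is mechanical. One applies Cauchy--Schwarz over the $\ll\log X$ dyadic ranges $P$, then Cauchy--Schwarz in $v$ against the measure $|g(\kappa+iv)|\,\dd v$ (of total mass $\ll\log X$), bounds $|A_P(\tau)|\ll\log X$ by Lemma~\ref{Large_t} (extended to subintervals of $[P,2P]$ — immediate from its proof — and transferred from abscissa $1/2+\kappa$ to $1/2-\kappa$ by partial summation, using $p^{2\kappa}\ll 1$ for $p\le 2X$), and applies the Mean Value Theorem to $B_{P,k}$, using its translation invariance to bound the integral over any interval of length $\le T$ by $(T+O(X/P))\sum_{\substack{X/(2P)\le m\le 4X/P\\ \omega_h(m)=k-1}}m^{-1}\ll T\,\sigma_{P,k}$. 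This gives $\tfrac1T\int_{X^{1/2}}^{T}|kS_k(t)|^2\,\dd t\ll(\log X)^{5}\sigma_k$ with $\sigma_k:=\sum_{h<P\le X}\sigma_{P,k}$ and $\sum_k\sigma_k=\sum_{h<P\le X}\sum_{X/(2P)\le m\le 4X/P}m^{-1}\ll\log X$ (each window has bounded ratio and there are $\ll\log X$ ranges $P$). Feeding this into the weighted Cauchy--Schwarz over $k$ yields $\tfrac1T\int_{X^{1/2}}^{T}|S(t)|^2\,\dd t\ll(\log X)^{5}\big(\sum_k k^{-1}\sqrt{\sigma_k}\big)^{2}\le(\log X)^{5}\big(\sum_k k^{-2}\big)\big(\sum_k\sigma_k\big)\ll(\log X)^{6}$, so $\tfrac1{hT}\int_{X^{1/2}}^{T}|S(t)|^2\,\dd t\ll(\log X)^{6}/h$, which is Proposition~\ref{Rough}. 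The hardest step is, without question, the decoupling of the preceding paragraph together with the simultaneous calibration of $V$ against both the Perron error and the admissibility range of Lemma~\ref{Large_t}; everything afterward is the Mean Value Theorem (efficient precisely because $B_{P,k}$ is shorter than $T$) plus bookkeeping of logarithms, with the weighted Cauchy--Schwarz over $k$ responsible for pinning the exponent at exactly $6$.
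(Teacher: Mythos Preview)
Your overall strategy --- isolate a large prime factor, decouple the variables $p$ and $m$ via Perron, bound the prime sum by Lemma~\ref{Large_t}, and apply the Mean Value Theorem to the short cofactor sum --- is exactly the paper's, and your stratification by $\omega_h(n)=k$ together with the weighted Cauchy--Schwarz $(\sum_k k^{-1}\sqrt{\sigma_k})^2\le(\sum_k k^{-2})(\sum_k\sigma_k)$ is a pleasant alternative to the paper's Ramar\'e-type weight $1/(\#\{q>h:q\mid m\}+\one_{p\nmid m})$. The main-term log-count does work out to six.

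However, there is a genuine gap in the handling of the terms with $p\mid m$ (equivalently $p^2\mid n$). Your pointwise bound $\ll X^{1/2}/h$ on their contribution to $kS_k$ is correct, but ``hence negligible after squaring and integrating'' is false: squaring the pointwise bound and integrating over $[X^{1/2},T]$, then dividing by $hT$, yields $X/h^3$, which for small $h$ (say $h=(\log X)^{10}$, well within the range of Theorem~\ref{main_thm}) is enormously larger than the target $(\log X)^6/h$. These terms must instead be kept as a Dirichlet polynomial and estimated by the Mean Value Theorem, exploiting that their coefficients are supported on the sparse set of $n\le 4X$ divisible by some $p^2$ with $p>h$ (so $\sum_n|c_n|^2/n\ll 1/h$); one then gets a contribution $\ll 1/h$. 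This is precisely why the paper does \emph{not} discard these terms but isolates them as the separate sum $\sum_{p}p^{-1-2it}\sum_{m\le 4X/p^2}b_{mp}m^{-1/2-it}$ and applies Lemma~\ref{MVT} directly.

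There is also a secondary log-counting issue: with your choice $V=X^{1/2}/(\log X)^2$ the Perron error is $\ll(\log X)^3$ \emph{per dyadic block} $P$; after Cauchy--Schwarz over the $\asymp\log X$ blocks this feeds $(\log X)^{8}$, not $(\log X)^{6}$, into $\tfrac1T\int|kS_k|^2$, and the final exponent becomes $8$ rather than $6$. The paper takes $Y=X^{1/2}/2$, giving per-block Perron error $\ll\log X$, which is exactly what is needed; any $V\asymp X^{1/2}$ works and still keeps $t+v$ in the (slightly extended) range of Lemma~\ref{Large_t}.
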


\begin{proof}
Note that
\begin{align*}
    \sum_{\substack{{X \leq n \leq 4X}\\{\exists p|n:p>h}}}
    \frac{\lambda(n)}{n^{\frac{1}{2}+it}}
    &=
    \sum_{h<p\leq 4X}
    \frac{\lambda(p)}{p^{\frac{1}{2}+it}}
    \sum_{X/p \leq m \leq  4X/p}
    \frac{1}{m^{\frac{1}{2}+it}}
    \frac{\lambda(m)}{\#\{\text{$q$ prime}:q>h,q|m\}+\one_{p\nmid m}},
\end{align*}
where $\#\{\text{$q$ prime}:q>h,q|m\}+\one_{p\nmid m}$ counts\footnote{In the published version of \cite{MatRad_Mult}, the term $\one_{p\nmid m}$ appears as the constant $1$, but this was corrected in later versions of their paper. In any case, we will see that this misprint does not affect their argument.} the number of ways $X \leq n \leq 4X$ can be factored as $n=mp$ with $p>h$. As Matom{\"a}ki and Radziwi{\l}{\l} remark in \cite{MatRad_Mult}, this decomposition is analogous to Buchstab's identity, which is a variant of Ramar{\'e}'s identity; see Section 17.3 of \cite{FriedIwa}.

Our goal now is to remove the dependence on $\one_{p\nmid m},$ which is done by splitting the inner sum into those $m$ for which $p\nmid m$ and $p|m$, respectively:
\begin{align*}
    &
                \sum_{h<p\leq 4X}
                \frac{\lambda(p)}{p^{\frac{1}{2}+it}}
                \sum_{X/p \leq m \leq 4X/p}
                \frac{\lambda(m)}{m^{\frac{1}{2}+it}}
                \frac{1}{\#\{q>h:q|m\}+\one_{p\nmid m}}\\
    &\qquad\qquad
    =
                \sum_{h<p\leq 4X}
                \frac{\lambda(p)}{p^{\frac{1}{2}+it}}
                \sum_{\substack{{X/p \leq m \leq 4X/p}\\{p\nmid m}}}
                \frac{1}{m^{\frac{1}{2}+it}}
                \frac{\lambda(m)}{\#\{q>h:q|m\}+1}
    +
                \sum_{h<p\leq 4X}
                \frac{\lambda(p)}{p^{\frac{1}{2}+it}}
                \sum_{\substack{{X/p \leq m \leq 4X/p}\\{p|m}}}
                \frac{1}{m^{\frac{1}{2}+it}}
                \frac{\lambda(m)}{\#\{q>h:q|m\}},
\end{align*}
with $q$ varying over the set of primes. This can be simplified further by adding and subtracting all $m$ for which $p|m$ to the first term  and setting
\begin{align*}
    a_m:=\frac{-\lambda(m)}{\#\{q>h:q|m\}+1},\; 
    b_m:=\frac{-\lambda(m)}{\#\{q>h:q|m\}(\#\{q>h:q|m\}+1)},
\end{align*}
which yields
\begin{align*}
    \sum_{h<p\leq 4X}\frac{-1}{p^{\frac{1}{2}+it}}
    &\sum_{X/p \leq m \leq 4X/p}\frac{\lambda(m)}{m^{\frac{1}{2}+it}}\frac{1}{\#\{q>h:q|m\}+\one_{p\nmid m}}\\
    &\qquad\qquad=
    \sum_{h<p\leq 4X}
    \frac{1}{p^{\frac{1}{2}+it}}
    \sum_{X/p \leq m \leq 4X/p}\frac{a_m}{m^{\frac{1}{2}+it}}
    +
    \sum_{h<p\leq 4X}
    \frac{1}{p^{\frac{1}{2}+it}}
    \sum_{\substack{{X/p \leq m \leq 4X/p}\\{p|m}}}\frac{b_m}{m^{\frac{1}{2}+it}}\\
    &\qquad\qquad=
    \sum_{h<p\leq 4X}
    \frac{1}{p^{\frac{1}{2}+it}}
    \sum_{X/p \leq m \leq 4X/p}\frac{a_m}{m^{\frac{1}{2}+it}}
    +
    \sum_{h<p\leq 4X}
    \frac{1}{p^{1+2it}}
    \sum_{X/p^2\leq m \leq  4X/p^2}\frac{b_{mp}}{m^{\frac{1}{2}+it}},
\end{align*}
and where the last line follows by writing $m=mp$ in the second double sum. Thus,
\begin{align*}
                \frac{1}{hT}
                \int_{X^\frac{1}{2}}^{T}\Big|
                \sum_{\substack{{X \leq n \leq 4X}\\{\exists p|n:p>h}}}
                \frac{\lambda(n)}{n^{\frac{1}{2}+it}}
                \Big|^2\dd t
     &\ll
                \frac{1}{hT}
                \int_{X^\frac{1}{2}}^{T}\Big|
                \sum_{h<p\leq 4X}\frac{1}{p^{\frac{1}{2}+it}}
                \sum_{X/p \leq m \leq 4X/p}
                \frac{a_m}{m^{\frac{1}{2}+it}}
                \Big|^2\dd t\\
    &
                \qquad\qquad+
                \frac{1}{hT}
                \int_{X^\frac{1}{2}}^{T}\Big|
                \sum_{h<p\leq 4X}
                \frac{1}{p^{1+2it}}
                \sum_{X/p^2\leq m \leq 4 X/p^2}
                \frac{b_{mp}}{m^{\frac{1}{2}+it}}
                \Big|^2\dd t.
\end{align*}

Applying the Mean Value Theorem (Lemma \ref{MVT}) to the second integral, we see that
\begin{align*}
    \frac{1}{hT}
    \int_{X^\frac{1}{2}}^T\Big|
    &\sum_{h<p<4X}\frac{1}{p^{1+2it}}
    \sum_{X/p^2 \leq m \leq 4X/p^2}\frac{b_{mp}}{m^{\frac{1}{2}+it}}\Big|^2\dd t\\
    &\qquad\qquad\ll
    \frac{1}{hT}(T+X)
    \sum_{h<p<4X}\frac{1}{p^2}
    \sum_{X/p^2 \leq m \leq 4X/p^2}\frac{1}{m}
    \ll
    \frac{1}{h},
\end{align*}
recalling that $X/h\leq T \leq X$.

For the remaining integral, we wish to separate the variables $p$ and $m$, so that we may apply a pointwise bound to the sum over $p$. We begin by splitting the sum over $p$ into dyadic intervals: 
\begin{align*}
    \frac{1}{hT}
    \int_{X^\frac{1}{2}}^{T}\Big|
    \sum_{h<p\leq 4X}
    &\frac{1}{p^{\frac{1}{2}+it}}
    \sum_{X/p \leq m \leq 4X/p}
    \frac{a_m}{m^{\frac{1}{2}+it}}
    \Big|^2\dd t\\
    &=
    \frac{1}{hT}
    \int_{X^\frac{1}{2}}^{T}\Big|
    \sum_{j = \lfloor \log_2 h \rfloor}^{\lfloor \log_2 4X \rfloor - 1}
    \sum_{2^j<p\leq 2^{j+1}}
    \frac{1}{p^{\frac{1}{2}+it}}
    \sum_{\substack{
    {X/2^{j+1}\leq m \leq X/2^{j-1}}\\{X/p \leq m \leq 4X/p}}}
    \frac{a_m}{m^{\frac{1}{2}+it}}
    \Big|^2\dd t\\
    &\ll
    \frac{(\log X)^2}{hT}
    \max_{\log_2 h\leq j \leq \log_2(4X)}
    \int_{X^\frac{1}{2}}^T\Big|
    \sum_{2^j<p\leq 2^{j+1}}
    \frac{1}{p^{\frac{1}{2}+it}}
    \sum_{\substack{
    {X/2^{j+1}\leq m \leq X/2^{j-1}}\\{X/p \leq m \leq 4X/p}}}
    \frac{a_m}{m^{\frac{1}{2}+it}}
    \Big|^2\dd t,
\end{align*}
where $\log_2$ denotes the base $2$ logarithm function and where the last line follows by taking the absolute value inside the sum over $j$, noting that there are $\ll \log X/h\leq \log X$ such dyadic intervals.

Using Lemma \ref{EffPer} with $\kappa=1/\log X$, we can remove the condition that $X \leq mp \leq 4X$:
\begin{align*}
    \one_{X \leq mp \leq 4X}
    =
    \frac{1}{2\pi i}
    \int_{\kappa-iY}^{\kappa+iY}
    \frac{(4X)^s-X^s}{(mp)^s}
    \frac{\dd s}{s}
    +
    \mathcal{O}\Big(
    \frac{1/(mp)^\kappa}{\max\{1,Y|\log X/mp|\}}
    \Big),
\end{align*}
which yields:
\begin{align*}
        \frac{1}{hT}
        \int_{X^\frac{1}{2}}^{T}\Big|
        &\sum_{h<p\leq 4X}
        \frac{1}{p^{\frac{1}{2}+it}}
        \sum_{X/p \leq m \leq 4X/p}
        \frac{a_m}{m^{\frac{1}{2}+it}}
        \Big|^2\dd t\\
        &\qquad\qquad\ll
        \frac{(\log X)^2}{hT}
        \int_{X^\frac{1}{2}}^{T}\Big|
        \int_{\kappa-iY}^{\kappa+iY}
        \frac{(4X)^s-X^s}{s}
        \sum_{2^j<p\leq 2^{j+1}}
        \frac{1}{p^{s+\frac{1}{2}+it}}
        \sum_{X/2^{j+1}\leq m \leq X/2^{j-1}}
        \frac{a_m}{m^{s+\frac{1}{2}+it}}
        \dd s
        \Big|^2\dd t\\
        &\qquad\qquad\qquad\qquad+
        \frac{(\log X)^2}{h}
        \Big(
        \frac{X^\frac{1}{2}\log X}{Y}
        \Big)^2,
\end{align*}
for some ${\log_2 h\leq j \leq \log_2(4X)}$

We can now use Minkowski's Inequality for integrals (see Section A.1 of \cite{Stein}) to change the order of integration:
\begin{align*}
    &
        \int_{X^\frac{1}{2}}^{T}\Big|
        \int_{\kappa-iY}^{\kappa+iY}
        \frac{(4X)^s-X^s}{s}
        \sum_{2^j<p\leq 2^{j+1}}
        \frac{1}{p^{s+\frac{1}{2}+it}}
        \sum_{X/2^{j+1}\leq m \leq X/2^{j-1}}
        \frac{a_m}{m^{s+\frac{1}{2}+it}}
        \dd s
        \Big|^2\dd t\\
    &\qquad\qquad\ll
        \Biggr(
        \int_{\kappa-iY}^{\kappa+iY}
        \Biggr(
        \int_{X^\frac{1}{2}}^{T}
        \Big|
        \frac{(4X)^s-X^s}{s}
        \sum_{2^j<p\leq 2^{j+1}}
        \frac{1}{p^{s+\frac{1}{2}+it}}
        \sum_{X/2^{j+1}\leq m \leq X/2^{j-1}}
        \frac{a_m}{m^{s+\frac{1}{2}+it}}
        \Big|^2\dd t
        \Biggr)^\frac{1}{2}
        \dd s        
        \Biggr)^2.
\end{align*}
Then, by taking $Y=X^\frac{1}{2}/2$, we can apply Lemma \ref{Large_t} to bound the sum over $p$; this yields the upper bound
\begin{align*}
        &\ll
        (\log X)^2
        \Biggr(
        \int_{\kappa-iY}^{\kappa+iY}
        \frac{1}{|s|}
        \Biggr(
        \int_{X^\frac{1}{2}}^{T}
        \Big|
        \sum_{X/2^{j+1}\leq m \leq X/2^{j-1}}
        \frac{a_m}{m^{s+\frac{1}{2}+it}}
        \Big|^2\dd t
        \Biggr)^\frac{1}{2}
        \dd s        
        \Biggr)^2\\
        &\ll
        (\log X)^4
        (T+X/h)
        \sum_{X/2^{j+1}\leq m \leq X/2^{j-1}}
        \frac{1}{m}\\
        &\ll
        (\log X)^4
        (T+X/h),
\end{align*}
where the second to last line follows from the Mean Value Theorem, recalling that $j\geq \log h/\log 2$, and where the additional powers of $\log X$ come from the bound
\begin{align*}
        \int_{\kappa-iY}^{\kappa+iY}
        \frac{1}{|s|}
        \dd s
        \ll
        \log X,
\end{align*}
recalling that $\kappa=1/\log X$ and $Y=X^\frac{1}{2}/2.$

Therefore, 
\begin{align*}
    \frac{1}{hT}
    \int_{X^\frac{1}{2}}^T\Big|
    \sum_{\substack{{X \leq n \leq 4X}\\{\exists p>h:p|n}}}
    \frac{\lambda(n)}{n^{\frac{1}{2}+it}}
    \Big|^2\dd t
    &\ll
    \frac{(\log X)^{6}}{h},
\end{align*}
provided that $X/h\leq T \leq X$, which is the desired result.
\end{proof}

\begin{rem}
In the proof of Proposition \ref{Rough}, we used a contour integral to separate the variables $p$ and $m$, which lost two factors of $\log X$. As the referee remarked, introducing a smoothing at the beginning of our argument makes the separation of variables loss-less: after Lemma \ref{x-t} (Plancherel), we are working with smooth sums and we can use the rapid decay of the Mellin Transform to ensure that the integral of $1/|s|$, which produced the original factors of $\log X$, converges. This would yield the upper bound $\ll Xh(\log X)^4$ in Theorem \ref{main_thm}. If we truly wish to be pedantic, our current methods yield the upper bound $\ll Xh((\log X) (\log X/h))^2,$ where the $\log X$ factors come from the bound on the sum over primes (Lemma \ref{Large_t}) and where the $\log X/h$ factors come from splitting the sum over primes $h<p\leq 4X$ into dyadic intervals.
\end{rem}

To complete the proof of Theorem \ref{main_thm}, it remains to consider the $h$-smooth integers. The next section is dedicated to this task.
\section{Smooth integers}
\label{SmallPrimes}

For the integers $X \leq n \leq 4X$ all of whose prime factors are $\leq h$, our goal is to obtain the following estimate:
\begin{align*}
        \frac{1}{hT}
        \int_{X^\frac{1}{2}}^T\Big|    
        \sum_{\substack{{X \leq n \leq 4X}\\{p|n \Rightarrow p\leq h}}}
        \frac{\lambda(n)}{n^{\frac{1}{2}+it}}
        \Big|^2\dd t
        \ll
        \frac{1}{h},
\end{align*}
for $X/h\leq T \leq X$ and $h$ as large as possible. This will be accomplished using the MVT (Lemma \ref{MVT}), together with some standard results concerning smooth numbers.

To begin, fix $\epsilon>0$ and let $\Psi(x,y)$ denote the number of $y$-smooth integers up to $x$. By writing $x=y^u$, we have that 
\[\Psi(x,y)=xu^{-(1+o(1))},\]
uniformly in the range $u\leq y^{1-\epsilon}$, as both $y$ and $u$ tend to infinity; see Corollary 1.3 \cite{HildTen}, for example. In particular, 
\[
\Psi(x,y)\ll \frac{x}{y},
\]
provided $y\leq\exp\left(\sqrt{\left(\frac{1}{2}-o(1)\right)\log x \log\log x}\right)$. Using the above, together with the MVT, we then have that
\begin{align*}
        \frac{1}{hT}
        \int_{X^\frac{1}{2}}^T\Big|    
        \sum_{\substack{{X \leq n \leq 4X}\\{p|n \Rightarrow p\leq h}}}
        \frac{\lambda(n)}{n^{\frac{1}{2}+it}}
        \Big|^2\dd t
        &\ll
        \frac{1}{hT}(T+X)
        \sum_{\substack{{X \leq n \leq 4X}\\{p|n \Rightarrow p\leq h}}}
        \frac{1}{n}\\
        &\ll
       \frac{1}{hT}(T+X)\frac{\Psi(4X,h)}{X}\\
       &\ll
       \frac{1}{h},
\end{align*}
for $h\leq \exp\left(\sqrt{\left(\frac{1}{2}-o(1)\right)\log X \log\log X}\right)$, recalling that $X/h\leq T \leq X$. In other words,

\begin{prop}
\label{Smooth}
Suppose $h=h(X)\rightarrow \infty$ as $X\rightarrow \infty$, then
    \begin{align*}
        \frac{1}{hT}
        \int_{X^\frac{1}{2}}^T\Big|    
        \sum_{\substack{{X \leq n \leq 4X}\\{p|n \Rightarrow p\leq h}}}
        \frac{\lambda(n)}{n^{\frac{1}{2}+it}}
        \Big|^2\dd t
        \ll
        \frac{1}{h},
    \end{align*}
provided $h\leq \exp\left(\sqrt{\left(\frac{1}{2}-o(1)\right)\log X \log\log X}\right)$.
\end{prop}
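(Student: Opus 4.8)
The plan is to apply the Mean Value Theorem (Lemma \ref{MVT}) directly. In contrast to the integers with a large prime factor treated in Proposition \ref{Rough}, the sum over $h$-smooth integers $X\leq n\leq 4X$ is \emph{already} a Dirichlet polynomial of length $N\ll X$, so there is no reason to decompose it further; we simply discard the sign of $\lambda$ and count, and the entire statement reduces to showing that the $h$-smooth integers in $[X,4X]$ are sparse enough.

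Concretely, I would write the inner sum as $\sum_n a_n n^{-it}$ with $a_n=\lambda(n)n^{-1/2}$ supported on the $h$-smooth integers in $[X,4X]$, so that $|a_n|^2 = 1/n$ on its support. Lemma \ref{MVT} then gives
\begin{align*}
    \int_{X^{1/2}}^{T}\Big|\sum_{\substack{X\leq n\leq 4X\\ p\mid n\,\Rightarrow\,p\leq h}}\frac{\lambda(n)}{n^{1/2+it}}\Big|^2\dd t
    \;\ll\;\bigl(T+\mathcal{O}(X)\bigr)\sum_{\substack{X\leq n\leq 4X\\ p\mid n\,\Rightarrow\,p\leq h}}\frac{1}{n}
    \;\ll\;(T+X)\,\frac{\Psi(4X,h)}{X},
\end{align*}
where $\Psi(x,y)$ denotes the number of $y$-smooth integers up to $x$ and the last step is the termwise bound $1/n\leq 1/X$ together with counting. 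Dividing by $hT$ and using the hypothesis $X/h\leq T\leq X$, which forces $(T+X)/(hT)\ll 1$, the proposition reduces to the smooth-number estimate $\Psi(4X,h)\ll X/h$.

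For that estimate I would invoke the standard bound $\Psi(x,y)=x\,u^{-(1+o(1))u}$ with $x=y^{u}$, valid uniformly for $u\leq y^{1-\epsilon}$ (Corollary 1.3 of \cite{HildTen}). Writing $h=\exp\!\bigl(\sqrt{c\log X\log\log X}\bigr)$, one gets $u=\log(4X)/\log h\asymp\sqrt{\log X/\log\log X}$ and $\log u=\bigl(\tfrac12-o(1)\bigr)\log\log X$, so that $u\log u=\bigl(\tfrac1{2\sqrt c}-o(1)\bigr)\sqrt{\log X\log\log X}$; comparing with $\log h=\sqrt c\,\sqrt{\log X\log\log X}$ shows $(1+o(1))u\log u\geq\log h$ precisely when $c\leq\tfrac12-o(1)$, whence $\Psi(4X,h)\leq x\exp\bigl(-(1+o(1))u\log u\bigr)\ll X/h$ in exactly the stated range of $h$. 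Note that $u$ has size $(\log X)^{1/2+o(1)}$ while $y^{1-\epsilon}=\exp\!\bigl((\log X)^{1/2+o(1)}\bigr)$, so the uniformity hypothesis holds with room to spare.

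The only real work here is bookkeeping: verifying that $\sqrt{(\tfrac12-o(1))\log X\log\log X}$ is exactly the threshold at which $\Psi(4X,h)\ll X/h$, i.e. solving $u\log u\sim\log h$ with $u=\log(4X)/\log h$; the constant $\tfrac12$ is sharp for this method because it is where $u\log u$ and $\log h$ cross over. The conceptual limitation, which I would record in a remark rather than attempt to overcome, is that the argument exploits no cancellation among the values $\lambda(n)$ whatsoever; to admit larger $h$ one would have to recover some cancellation from the smooth part, for instance by running the Matom{\"a}ki--Radziwi{\l}{\l} decomposition on the $h$-smooth integers, which is left as ongoing work.
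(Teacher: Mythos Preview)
Your proposal is correct and follows exactly the paper's approach: apply the Mean Value Theorem directly to the $h$-smooth Dirichlet polynomial, bound $\sum 1/n$ by $\Psi(4X,h)/X$, and invoke the Hildebrand--Tenenbaum estimate $\Psi(x,y)=xu^{-(1+o(1))u}$ to get $\Psi(4X,h)\ll X/h$ in the stated range. Your explicit computation showing that $c=\tfrac12$ is the exact threshold at which $u\log u\sim\log h$ is a bit more detailed than what the paper records, but otherwise the arguments are identical.
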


Combining Propositions \ref{Rough} and \ref{Smooth} yields (\ref{goal}), which, together with the initial reductions from Section \ref{InitialRed}, yields Theorem \ref{main_thm}.

\section{Acknowledgements}
The author would like to thank Maksym Radziwi{\l}{\l} for suggesting this problem and for his continuous support throughout the research phase of this paper, Andrew Granville for simplifying the argument originally presented in Section \ref{SmallPrimes}, and the anonymous referee for carefully reviewing this manuscript and whose comments have not only improved the overall exposition of this paper, but which have also strengthened the main theorem. In addition to this, the author thanks Renaud Alie, Alison Inglis, Oleksiy Klurman, Jiajun Mai, Sacha Mangerel, Kaisa Matom{\"a}ki, Andrei Shubin, Joni Ter{\"a}v{\"a}inen, and Peter Zenz for many helpful discussions and comments. Finally, some of this work was conducted while the author was visiting CalTech; he is grateful for their hospitality.

\bibliographystyle{alpha}
\bibliography{main}

\textsc{Department of Mathematics and Statistics, McGill University, 805 Sherbrooke St. W., Montreal, QC H3A 2K6, Canada}

\textit{Email address:} \href{mailto:iakov.chinis@gmail.com}{{\texttt{iakov.chinis@gmail.com}}}

\end{document}